\definecolor{ItalianApricot}{rgb}{1,0.7,0.5}
\theoremstyle{plain}
\newtheorem{thm}{Theorem}[section]
\newtheorem{prop}[thm]{Proposition}
\newtheorem{lem}[thm]{Lemma}
\newtheorem{cor}[thm]{Corollary}
\theoremstyle{definition}
\newtheorem{defn}[thm]{Definition}
\theoremstyle{remark}
\newtheorem{rem}[thm]{Remark}
\numberwithin{equation}{section}
\renewcommand{\epsilon}{\varepsilon}
\renewcommand{\phi}{\varphi}
\DeclareMathOperator{\uh}{\upharpoonright}
\newcommand{\R}{\mathbb{R}}
\DeclareMathOperator{\cdim}{cdim}
\DeclareMathOperator{\KM}{\it KM}
\newcommand{\Pf}[1][]{\operatorname{P\!}_{f_{#1}}\!} % f-potential
\newcommand{\Ef}{\operatorname{E}_f} % f-energy
\newcommand{\Cf}[1][]{\operatorname{C}_{f_{#1}}} % f-capacity
\newcommand{\RP}{\operatorname{\widetilde{P}\!}_s\!} % Riesz s-potential
\newcommand{\RE}{\operatorname{\widetilde{E}}_s}	% Riesz s-energy
\newcommand{\Ps}{\operatorname{P\!}_s\!} % s-potential
\newcommand{\Es}{\operatorname{E}_s} % s-energy
\newcommand{\Cs}{\operatorname{C}_s} % s-capacity
\newcommand{\RElog}{\operatorname{\widetilde{E}}_{\log}}	% Riesz log-energy
\newcommand{\RPklog}{\operatorname{\widetilde{P}\!}_{\log^k}\!} % Riesz log-potential
\newcommand{\REklog}{\operatorname{\widetilde{E}}_{\log^k}}	% Riesz log-energy
\newcommand{\ww}{\operatorname{w}} % dynamic weight
\begin{document}

\title{Energy randomness}

%\date{June 17, 2014}
\date{\today}

\author[Miller]{Joseph S.~Miller}
\address[Miller]{Department of Mathematics\\
University of Wisconsin\\
Madison, WI 53706, USA}
\email{jmiller@math.wisc.edu}

\author[Rute]{Jason Rute}
\address[Rute]{Department of Mathematics\\
Pennsylvania State University\\
University Park, PA 16802, USA}
\email{jmr71@math.psu.edu}

\thanks{This work was started while the authors were participating in the Program on Algorithmic Randomness at the Institute for Mathematical Sciences of the National University of Singapore in June 2014. The authors would like to thank the institute for its support. Miller was partially supported by NSF grant DMS-1001847.}

\subjclass[2010]{Primary 03D32; Secondary 68Q30, 31C15}

% 03-XX: Mathematical logic and foundations
%	03Dxx: Computability and recursion theory
%		03D32: Algorithmic randomness and dimension [See also 68Q30]
%
% 68-XX: Computer science For papers involving machine computations and
%       programs in a specific mathematical area, see Section--04 in that area
%	68Qxx: Theory of computing
%		68Q30: Algorithmic information theory (Kolmogorov complexity, etc.)
%       [See also 03D32]
%
% 31-XX Potential theory {For probabilistic potential theory, see 60J45}
%	31Cxx 		Other generalizations
%		31C15   	Potentials and capacities

\begin{abstract}
Energy randomness is a notion of partial randomness introduced by Diamondstone and Kjos-Hanssen to characterize the sequences that can be elements of a Martin-L{\"o}f random closed set (in the sense of Barmpalias, Brodhead, Cenzer, Dashti, and Weber).  It has also been applied by Allen, Bienvenu, and Slaman to the characterization of the possible zero times of a Martin-L{\"o}f random Brownian motion.  In this paper, we show that $X \in 2^\omega$ is $s$-energy random if and only if $\sum_{n\in\omega} 2^{sn - \KM(X\uh n)} < \infty$, providing a characterization of energy randomness via a priori complexity $\KM$.  This is related to a question of Allen, Bienvenu, and Slaman.
\end{abstract}

\maketitle

%%%%%%%%
%%%%%%%%
\section{Introduction}
%%%%%%%%
%%%%%%%%

Algorithmic randomness is a branch of computability theory that studies objects that behave randomly with respect to computable statistical tests.   The most common randomness notion, Martin-L{\"o}f randomness, was first used to study random sequences in the space $2^\omega$ with respect to the fair-coin measure.  Since then, Martin-L{\"o}f randomness has been extended to other measures on $2^\omega$, including noncomputable measures.  It has also been extended to other spaces of objects.  For example, Fouch{\'e} \cite{Fouche2000}---building on work of Asarin and Pokrovskii \cite{Asarin.Pokrovskii:1986}---studied Martin-L{\"o}f random Brownian motion.  Later, Barmpalias, Brodhead, Cenzer, Dashti, and Weber \cite{Barmpalias2007} introduced a particular notion of Martin-L{\"o}f randomness for closed subsets of $2^\omega$.  

In an effort to characterize the sequences that are possible elements of Martin-L{\"o}f random closed sets, Diamondstone and Kjos-Hanssen \cite{DK:12} introduced $s$-energy randomness. (The concept was also implicitly mentioned in work of Reimann \cite{Reimann:2008vn}.)  A sequence $X\in 2^\omega$ is $s$-energy random (where $s$ is computable and $0<s<1$) if and only if $X$ is Martin-L{\"o}f random for some (not necessarily computable) measure $\mu$ on $2^\omega$ such that $\mu$ has finite Riesz $s$-energy,
\[
\iint \rho(X,Y)^{-s} \;d\mu(Y)d\mu(X).
\]
(Here $\rho$ is the standard metric on $2^\omega$.) The notion of energy comes from potential theory, and there is a strong connection between potential theory and probability theory.    Diamondstone and Kjos-Hanssen showed that if $X$ is $\log_2 (3/2)$-energy random, then $X$ is the member of some Martin-L{\"o}f random closed set.  (The converse direction will be proved in an upcoming paper by the second author \cite{Rute:aa}.)

Diamondstone and Kjos-Hanssen also showed a close relationship between energy randomness and effective Hausdorff dimension.  Namely, the constructive dimension of $X$ can be characterized via
\[
\cdim{X} = \sup\{s\colon X \text{ is } s\text{-energy random}\},
\]
where the supremum is over computable $s\in(0,1)$.

Allen, Bienvenu, and Slaman \cite{Allen:aa} studied a similar problem to that of Diamondstone and Kjos-Hanssen, namely the classification of zero times of a Martin-L{\"o}f random Brownian motion. They showed that for $t>0$, if $\cdim(t)>1/2$, then $B(t)=0$ for some Martin-L{\"o}f random Brownian motion $B$, and if $\cdim(t)<1/2$, then $B(t)\neq 0$ for all Martin-L{\"o}f random Brownian motions $B$.  While their work does not explicitly mention energy randomness, it does rely on calculations involving $1/2$-energy, which suggests a connection.  (The exact correspondence between $1/2$-energy randomness and the zero times of a Martin-L{\"o}f random Brownian motion---as well as other applications of energy randomness to multidimensional Brownian motion---will be addressed in \cite{Rute:aa}.)  Allen, Bienvenu, and Slaman \cite{Allen:aa} also asked whether the zero times of Martin-L{\"o}f random Brownian motion can be characterized via complexity.

The goal of this paper is to characterize $s$-energy randomness via a priori complexity $\KM$.
\begin{thm}\label{thm:main-s-energy} 
Let $s \in (0,1)$ be computable. A sequence $X \in 2^\omega$ is $s$-energy random if and only if
\begin{equation}
\sum_{n\in\omega} 2^{sn - \KM(X\uh n)} < \infty.\label{eq:main-result}
\end{equation}
\end{thm}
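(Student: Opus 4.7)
Write $M(\sigma) := 2^{-\KM(\sigma)}$ for the universal a priori semi-measure, so the hypothesis reads $\sum_n 2^{sn} M([X\uh n]) < \infty$. The proof hinges on the Riesz decomposition
\[
\iint \rho(X,Y)^{-s}\, d\mu(X)\, d\mu(Y) \;\asymp\; \sum_{n\geq 0} 2^{sn} \sum_{|\sigma|=n} \mu([\sigma])^2,
\]
obtained by stratifying pairs according to the length of their longest common prefix. This turns finite $s$-energy into a cylinder-summability condition, and it is the bridge between the energy-theoretic and complexity-theoretic descriptions.

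\textbf{Forward direction.} Assume $X$ is $\mu$-Martin-L\"of random for some $\mu$ with $\mathcal{E}_s(\mu) < \infty$, witnessed by a representation $r$ of $\mu$. Fubini applied to the identity above gives $\int T\,d\mu \asymp \mathcal{E}_s(\mu) < \infty$ for the $r$-lower-semicomputable function $T(Y) := \sum_n 2^{sn} \mu([Y\uh n])$. Hence $T(X) < \infty$ by $\mu$-randomness, since integrable lower semicomputable tests are finite on random points. The relativized Levin--Schnorr theorem gives $M^r([X\uh n]) \leq O(\mu([X\uh n]))$, and the bound $M \leq O(M^r)$ then yields $\sum_n 2^{sn} M([X\uh n]) \leq O(T(X)) < \infty$.

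\textbf{Backward direction: construction.} Assume the sum condition. A natural candidate is the $X$-dependent mixture
\[
\mu := \sum_{k\geq 0}\bigl(M([X\uh k]) - M([X\uh{k+1}])\bigr)\,\lambda_k \;+\; \bigl(1 - M([\emptyset])\bigr)\,\lambda,
\]
where $\lambda_k$ is the uniform probability measure on $[X\uh k]$ and $\lambda$ is the uniform measure on $2^\omega$. Monotonicity of $M$ along the path of $X$, together with $\lim_n M([X\uh n]) = 0$ (forced by the hypothesis), makes the coefficients nonnegative and sum to $1$. Telescoping gives $\mu([X\uh n]) \geq M([X\uh n])$, and an Abel summation using $M([X\uh k]) = o(2^{-sk})$, split by depth of agreement $j(\sigma) := \max\{k : X\uh k \preceq \sigma\}$, verifies $\sum_\sigma 2^{s|\sigma|}\mu([\sigma])^2 < \infty$ for every $s \in (0,1)$. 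Hence $\mathcal{E}_s(\mu) < \infty$.

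\textbf{Main obstacle: randomness of $X$ for $\mu$.} The delicate step is certifying that $X$ is $\mu$-Martin-L\"of random. Because $\mu$ is built directly from $X$, any representation of $\mu$ reconstructs $X$ effectively (e.g.\ from the ratios $\mu([\sigma 0])/\mu([\sigma 1])$), so $M^r$ along $X$ can exceed $\mu$ by a polynomial factor and the Reimann--Slaman form of Levin--Schnorr does not apply in the naive way. The resolution likely involves either averaging $\mu$ over a random parameter so that no representation of the averaged measure encodes $X$ exactly, or invoking a uniform-test (or blind) formulation of $\mu$-randomness in which the non-relativized $M$ suffices. Identifying and deploying the correct notion of randomness for this $X$-dependent $\mu$ is where the real technical work lies.
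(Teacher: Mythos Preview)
Your forward direction is correct and essentially the same as the paper's (Corollary~\ref{cor:one-direction} and the proposition preceding it): the $s$-potential $T(Y)=\sum_n 2^{sn}\mu[Y\uh n]$ is a $\mu$-integrable lower-semicomputable test, hence finite at $\mu$-random points, and the $\KM$-characterization of $\mu$-randomness transfers this bound to $M$.

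The backward direction has a genuine gap, which you yourself identify but do not close. Your measure $\mu$ is built from $X$ via $M$, so any representation of $\mu$ computes $X$; once tests relativize to $\mu$, there is no reason $X$ should be $\mu$-random, and neither of your suggested fixes (averaging over a random parameter, switching to blind/Hippocratic randomness) is carried out or known to work in this generality. This is not a minor technicality: the circularity ``$\mu$ encodes $X$'' is precisely what makes the backward direction hard, and it is not clear your construction can be rescued.

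The paper circumvents this entirely by proving the contrapositive through capacity. It introduces $\Cs$-randomness, where tests are $\Sigma^0_1$-classes of small $s$-capacity rather than small $\mu$-measure for a fixed $\mu$, and shows that $\Cs$-randomness coincides with $s$-energy randomness (Theorem~\ref{thm:rute}); this step is a compactness argument over the $\Pi^0_1$-class of measures with bounded potential, so no single $X$-dependent measure is ever needed. The technical heart (Section~\ref{sec:effectiveness}) then shows that for every $\Sigma^0_1$-class $U$ one can build a \emph{left-c.e.}\ measure whose total mass is at most a fixed constant times $\Cs(U)$ and whose $s$-potential is $\geq 1$ on $U$. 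Summing these over a $\Cs$-test covering $X$ yields a single left-c.e.\ measure that is absorbed by $M$, and comparing $s$-potentials forces $\sum_n 2^{sn-\KM(X\uh n)}=\infty$ (Lemma~\ref{lem:divergence}). The crucial point is that one never needs $X$ to be random for the constructed measure; only its left-c.e.\ nature matters.
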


In order to prove Theorem~\ref{thm:main-s-energy}, we will prove a stronger result.  We generalize $s$-energy randomness to $f$-energy randomness for functions $f\colon\omega\rightarrow[0,\infty)$.  In particular, $f(n)=2^{sn}$ will correspond to $s$-energy randomness.  In Theorem~\ref{thm:main}, we show that for certain functions $f$, $X$ is $f$-energy random if and only if
\[ 
\sum_{n\in\omega} f(n)2^{-\KM(X\uh n)} < \infty.
\]

A direct application of Theorem~\ref{thm:main-s-energy}, when combined with the aforementioned results of Kjos-Hanssen and Diamondstone, is that if $X$ satisfies (\ref{eq:main-result}) with $s=\log_2 (3/2)$, then $X$ is the member of some Martin-L{\"o}f random closed set.  Besides such applications to random closed sets and random Brownian motion, Theorem~\ref{thm:main-s-energy} is interesting for the follow three reasons.

Theorem~\ref{thm:main-s-energy} is very similar in form to the Ample Excess Lemma (proved by Miller and Yu~\cite{Miller.Yu:ta1} but implicit in G{\'a}cs~\cite[Proof of Theorem~5.2]{G:80}).  This result says the $X \in 2^\omega$ is Martin-L{\"o}f random if and only if 
\[ 
\sum_{n\in\omega} 2^{n-K(X\uh n)} < \infty,
\]
where $K$ is prefix-free Kolmogorov complexity.

Theorem~\ref{thm:main-s-energy} is also very similar to the Effective Capacitability Theorem of Reimann \cite[Theorem~14]{Reimann:2008vn}, which states that the following are equivalent for a computable real $s\in(0,1)$.
\begin{enumerate}
\item $X\in 2^\omega$ is Martin-L{\"o}f random for some probability measure $\mu$ on $2^\omega$ such that there exists some $C>0$ such that for all $\sigma \in 2^{<\omega}$, $\mu[\sigma] \leq C 2^{-s |\sigma|}$.
\item $X$ is strongly $s$-random, that is, for all $n$, $\KM(X\uh n) \geq sn + O(1)$.
\end{enumerate}

Last, Theorem~\ref{thm:main-s-energy} hints at a deep connection between potential theory and algorithmic information theory.  Potential theory concerns three basic notions: potential, energy, and capacity.  In order to prove Theorem~\ref{thm:main-s-energy}, we will use all three extensively. In particular, in this paper, the $s$-potential of a measure $\mu$ is the function
\[
\Ps\mu(X) = \sum_{n\in\omega} 2^{-sn}\mu[X\uh n].
\]
The function $\mathbf{M}(\sigma) = 2^{-\KM(\sigma)}$ is known as the universal left-c.e.\ semimeasure.  Theorem~\ref{thm:main-s-energy} can be interpreted as saying that $X$ is $s$-energy random if and only if the $s$-potential of the universal left-c.e.\ semimeasure $\mathbf{M}$ evaluated at $X$ is finite.  It would be interesting to understand what it means to take the potential of a semimeasure.

%%%%%%%%
%%%%%%%%
\section{Effective randomness}
%%%%%%%%
%%%%%%%%

Let $2^{<\omega}$ denote the set of finite binary strings, and let $2^\omega$ denote the space of infinite binary sequences.  We write $\lambda$ for the string of zero length.  For $\sigma \in 2^{<\omega}$, let $[\sigma]$ denotes the cylinder set of all $X \in 2^\omega$ that extend $\sigma$.  If $U \subseteq 2^{<\omega}$, let $[U]=\bigcup \{[\sigma] \colon \sigma \in U\}$. 

When we say that $\mu$ is a \emph{measure}, we mean $\mu$ is a finite Borel measure on $2^\omega$.  In particular, every nonnegative function $\mu$ on cylinder sets that satisfies $\mu[\sigma 0]+\mu[\sigma 1] = \mu[\sigma]$ can be uniquely extended to a measure.  A probability measure is a measure $\mu$ such that $\mu[\lambda]=1$.

A semimeasure $\rho$ is a nonnegative function $\rho$ on $2^{<\omega}$ that satisfies $\rho(\sigma 0) + \rho(\sigma 1) \leq \rho(\sigma)$ and such that $\rho(\lambda)<1$.  There is an effective enumeration of the left-c.e.\ semimeasures $\rho_i$ and therefore a universal left-c.e.\ semimeasure $\mathbf{M} = \sum_{i\in\omega} 2^{-i}\rho_i$ which mulitplicatively dominates all other left-c.e.\ semimeasures.  \emph{A priori complexity} is defined as $\KM(\sigma) = -\log_2 \mathbf{M}(\sigma)$.  

It is well-known that notions from computability theory, such as a priori complexity and $\Sigma^0_1$-classes, can be relativized to an oracle $A \in 2^\omega$, giving us $\KM^A$ and $\Sigma^0_1[A]$.  The same can be done if the oracle is a measure $\mu$, even though there may not be a minimal Turing degree that computes $\mu$.  For example, to define $\KM^\mu$, note that a semimeasure $\rho$ is left-c.e.\ in $\mu$ if and only if $\rho$ is left-c.e.\ in $A$ for every $A \in 2^\omega$ that computes $\mu$.  It is possible to effectively enumerate all left-c.e.\ semimeasures $\{\rho_i\}_{i\in \omega}$ computable in $\mu$ and thus to get a universal $\mu$-left-c.e.\ semimeasure $\mathbf{M}^\mu$.  As before, we let $\KM^\mu = -\log_2 \mathbf{M}^\mu$.  Note that $\KM^\mu(\sigma) \leq \KM(\sigma)$ up to a multiplicative constant. For more on using measures as oracles, see Reimann and Slaman~\cite{RS:15} and Day and Miller~\cite{Day:2013aa}.

\begin{defn}
Let $\mu$ be a measure (not necessarily computable). A computable sequence $\{U_n\}_{n\in\omega}$ of $\Sigma^0_1[\mu]$-classes is a \emph{$\mu$-test} if $(\forall n)\; \mu(U_n)\leq 2^{-n}$. A sequence $X\in 2^\omega$ \emph{passes} the $\mu$-test if $X\notin\bigcup_{n\in\omega} U_n$. We say that $X\in 2^\omega$ is \emph{$\mu$-random} if it passes every $\mu$-test.
\end{defn}

We will need a characterization of $\mu$-randomness using a priori complexity. It is a straightforward generalization of the well-known Lebesgue measure case.
\begin{prop}
A sequence $X \in 2^\omega$ is $\mu$-random if and only if there is a constant $C$ such that for all $n\in \omega$,
\[
\KM^\mu(X \uh n) \geq -\log_2 \mu[X \uh n] - C.
\]
\end{prop}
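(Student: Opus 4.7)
The proof is a routine adaptation of the classical Levin--Schnorr theorem to the case of a general (possibly noncomputable) measure $\mu$, treating $\mu$ as an oracle throughout.

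For the $(\Leftarrow)$ direction I would argue by contraposition: suppose $X$ fails some $\mu$-test $\{U_n\}_{n\in\omega}$. After replacing $U_n$ with $\bigcup_{m\geq n} U_m$ we may assume the test is decreasing and $X \in \bigcap_n U_n$. Define
\[
\rho(\sigma) = \sum_{n\in\omega} \mu([\sigma] \cap U_n).
\]
One verifies immediately that $\rho(\sigma 0) + \rho(\sigma 1) = \rho(\sigma)$, that $\rho(\lambda) \leq \sum_n 2^{-n} < \infty$, and, since each $U_n$ is $\Sigma^0_1[\mu]$, that $\sigma \mapsto \mu([\sigma]\cap U_n)$ is left-c.e.\ in $\mu$; so after normalization $\rho$ is a $\mu$-left-c.e.\ semimeasure. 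For each $n$ pick a prefix $\sigma_n \prec X$ with $[\sigma_n] \subseteq U_n$; since the test is decreasing, $[\sigma_n] \subseteq U_m$ for all $m \leq n$, giving $\rho(\sigma_n) \geq n\,\mu[\sigma_n]$. Universality of $\mathbf{M}^\mu$ then yields $\KM^\mu(\sigma_n) \leq -\log_2 \mu[\sigma_n] - \log_2 n + O(1)$, so the complexity bound must fail on $X$.

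For the $(\Rightarrow)$ direction I would construct a $\mu$-test that catches any violation of the bound. For each $c \in \omega$ set
\[
V_c = \{\sigma \in 2^{<\omega} \colon \mathbf{M}^\mu(\sigma) > 2^c \mu[\sigma]\}, \qquad U_c = [V_c].
\]
Letting $V'_c$ be the prefix-free antichain of minimal elements of $V_c$, the semimeasure property of $\mathbf{M}^\mu$ gives
\[
\mu(U_c) = \sum_{\sigma \in V'_c} \mu[\sigma] < 2^{-c} \sum_{\sigma \in V'_c} \mathbf{M}^\mu(\sigma) \leq 2^{-c}.
\]
Since $\mathbf{M}^\mu$ is $\mu$-left-c.e.\ and $\sigma \mapsto \mu[\sigma]$ is $\mu$-computable, $V_c$ is c.e.\ in $\mu$ and so $U_c \in \Sigma^0_1[\mu]$; the sequence is moreover nested because $V_{c+1} \subseteq V_c$. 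If $X$ is $\mu$-random then some $c$ satisfies $X \notin U_c$, i.e., no prefix of $X$ lies in $V_c$, which is precisely $\KM^\mu(X\uh n) \geq -\log_2 \mu[X\uh n] - c$ for all $n$.

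The only real subtlety to verify is the oracle-effectivity with $\mu$ as oracle: that $\sigma \mapsto \mu([\sigma] \cap U_n)$ is left-c.e.\ in $\mu$ whenever $U_n \in \Sigma^0_1[\mu]$, and that $\mathbf{M}^\mu(\sigma) > 2^c \mu[\sigma]$ defines a $\mu$-c.e.\ set of strings. As noted in the excerpt immediately after the definition of $\KM^\mu$, ``left-c.e.\ in $\mu$'' is equivalent to ``left-c.e.\ in every $A$ computing $\mu$,'' so both facts follow by the usual Turing-relativized calculations performed uniformly over such oracles $A$.
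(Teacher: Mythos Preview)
Your proof is correct and follows essentially the same route as the paper: both directions build the same objects---the semimeasure $\rho(\sigma)=\sum_n \mu([\sigma]\cap U_n)$ (the paper writes it as $\nu[\sigma]=\int_{[\sigma]}\sum_k\chi_{U_k}\,d\mu$) for the contrapositive of $(\Leftarrow)$, and the test $U_c=\{Y:(\exists n)\,\mathbf{M}^\mu(Y\uh n)>2^c\mu[Y\uh n]\}$ for $(\Rightarrow)$. The only cosmetic difference is that you first make the test decreasing, whereas the paper instead uses that a finite intersection of open neighborhoods of $X$ contains some $[X\uh n]$; note also that after your replacement $U_n\mapsto\bigcup_{m\geq n}U_m$ the bound becomes $\mu(U_n)\leq 2^{1-n}$, so $\rho(\lambda)\leq 4$ rather than $2$, which of course changes nothing.
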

\begin{proof}
Assume that $X$ is not $\mu$-random.  Then $X$ fails some $\mu$-test $\{U_k\}_{k\in\omega}$.  Let $\nu$ be the measure given by $\nu[\sigma] = \int_{[\sigma]} \sum_{k\in \omega} \chi_{U_k}\;d\mu$.  This measure is left-c.e.\ in $\mu$ and $\nu[\lambda]= \sum_{k\in \omega} \mu(U_k) \leq 2$.  Therefore, there is a large enough natural number $i > 0$ such that $\mathbf{M}^\mu \geq 2^{-i} \nu$.  Let $k = 2^{j + i}$ for an arbitrary $j\in\omega$.  Since $X \in U_0 \cap \ldots \cap U_k$, there is an $n$ large enough that $[X\uh n] \subseteq U_0 \cap \ldots \cap U_k$.  Therefore $\mathbf{M}^\mu(X \uh n) \geq 2^{-i} \nu[X \uh n] \geq k 2^{-i} \mu[X \uh n] = 2^j \mu[X \uh n]$. Hence $\KM^\mu(X \uh n) \leq -\log_2 \mu[X\uh n] - j$ for an arbitrary large $j$.

Conversely, assume that $\KM^\mu(X \uh n) \leq -\log_2 \mu[X\uh n] - k$ for all $k$.  Let 
\[
U_k = \left\{Y \in 2^\omega \colon (\exists n)\; \frac{\mathbf{M}^\mu(Y \uh n)}{\mu[Y \uh n] } > 2^k \right\}.
\]
This is a computable sequence of $\Sigma^0_1[\mu]$-classes.  Moreover, let $S_k \subseteq 2^{<\omega}$ be a prefix-free set such that $U_k = [S_k]$ and such that for all $\sigma \in S_k$, $\mathbf{M}^\mu(\sigma) \geq 2^k \mu[\sigma]$.  Then 
\begin{align*}
\mu(U_k) 
= \sum_{\sigma \in S_k} \mu[\sigma] 
\leq 2^{-k} \sum_{\sigma \in S_k} \mathbf{M}^\mu(\sigma) 
\leq 2^{-k}
\end{align*}
Therefore $\{U_k\}_{k\in\omega}$ is a $\mu$-test.  Since $X \in \bigcap_{k\in \omega} U_k$, $X$ fails the test and is not $\mu$-random.
\end{proof}

%%%%%%%%
%%%%%%%%
\section{Energy randomness}
%%%%%%%%
%%%%%%%%

Fix a function $f\colon\omega\to[0,\infty)$. Before we define what it means for a sequence $X\in 2^\omega$ to be $f$-energy random, we must define $f$-potential and $f$-energy.  (While the presentation in this paper is self-contained, the reader may consult the book of Lyons and Peres \cite{Lyons:aa} for more on potential and energy, including a physical interpretation where $f$ is the resistance in an electrical network.)

\begin{defn}
Let $\mu$ be a measure on $2^\omega$. The \emph{$f$-potential} of $\mu$ is the function $\Pf\mu(X) = \sum_{n\in\omega} f(n)\mu[X\uh n]$. The \emph{$f$-energy} of $\mu$ is
\[
\Ef(\mu) = \int \Pf\mu(X) \;d\mu(X) = \int \sum_{n\in\omega} f(n)\mu[X\uh n] \;d\mu(X).
\]
\end{defn}

\begin{lem}\label{lem:two-measures}
Let $\mu$ and $\nu$ be any two measures. Then
\[
\int \Pf\nu(X) \;d\mu(X) = \sum_{\sigma\in 2^{<\omega}} f(|\sigma|)\mu[\sigma]\nu[\sigma].
\]
\end{lem}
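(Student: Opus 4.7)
The plan is to apply Tonelli's theorem to swap the integral and the sum, then reorganize by level. All terms involved are nonnegative, so there is no integrability issue to worry about.

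First I would expand $\Pf\nu(X)$ according to its definition, so that the left side becomes
\[
\int \sum_{n\in\omega} f(n)\nu[X\uh n] \;d\mu(X).
\]
Since $f(n)\nu[X\uh n]\geq 0$ for every $n$ and every $X$, Tonelli's theorem permits interchanging the sum and the integral, yielding
\[
\sum_{n\in\omega} f(n) \int \nu[X\uh n] \;d\mu(X).
\]

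Next I would observe that for each fixed $n$ the integrand $X\mapsto \nu[X\uh n]$ is constant on each length-$n$ cylinder: if $X\in[\sigma]$ with $|\sigma|=n$ then $X\uh n=\sigma$, so $\nu[X\uh n]=\nu[\sigma]$. Splitting the integral over the disjoint union $2^\omega=\bigsqcup_{|\sigma|=n}[\sigma]$ gives
\[
\int \nu[X\uh n] \;d\mu(X) = \sum_{|\sigma|=n} \nu[\sigma]\mu[\sigma].
\]
Substituting this back and collapsing the double sum $\sum_{n\in\omega}\sum_{|\sigma|=n}$ into a single sum over $\sigma\in 2^{<\omega}$ (indexed by length $|\sigma|=n$) produces the claimed identity.

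There is essentially no obstacle: the only thing to check is the Tonelli hypothesis, which holds trivially by nonnegativity. The proof is short and symmetric in $\mu$ and $\nu$, as expected from the form of the right-hand side.
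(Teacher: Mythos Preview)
Your proposal is correct and matches the paper's own proof essentially line for line: expand $\Pf\nu$, swap sum and integral by nonnegativity, split the inner integral over the length-$n$ cylinders, and reindex. The only difference is that the paper does not explicitly name Tonelli's theorem, but the computation is identical.
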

\begin{proof}
\begin{align*}
\int \Pf\nu(X) \;d\mu(X) &= \int \sum_{n\in\omega} f(n)\nu[X\uh n] \;d\mu(X) = \sum_{n\in\omega} \int f(n)\nu[X\uh n] \;d\mu(X) \\
	&= \sum_{n\in\omega}\sum_{\sigma\in 2^n} f(n)\nu[\sigma]\mu[\sigma] = \sum_{\sigma\in 2^{<\omega}} f(|\sigma|)\nu[\sigma]\mu[\sigma]. \qedhere
\end{align*}
\end{proof}

If we take $\nu=\mu$ in the previous lemma, we get a satisfying expression for the $f$-energy of a measure $\mu$.

\begin{prop}\label{prop:energy-characerization}
$\displaystyle\Ef(\mu) = \sum_{\sigma\in 2^{<\omega}} f(|\sigma|)\mu^2[\sigma]$.
\end{prop}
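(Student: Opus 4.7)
The plan is to observe that this proposition is essentially an immediate corollary of Lemma~\ref{lem:two-measures}, as already hinted at by the preceding sentence in the paper. So my proof will be very short: unpack the definition of $\Ef(\mu)$ and invoke the lemma with a particular choice of the second measure.

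First, by the definition of $f$-energy, we have
\[
\Ef(\mu) = \int \Pf\mu(X) \, d\mu(X).
\]
Then I would apply Lemma~\ref{lem:two-measures} with $\nu = \mu$. The lemma gives
\[
\int \Pf\nu(X) \, d\mu(X) = \sum_{\sigma \in 2^{<\omega}} f(|\sigma|) \mu[\sigma] \nu[\sigma],
\]
and substituting $\nu = \mu$ immediately yields
\[
\Ef(\mu) = \sum_{\sigma \in 2^{<\omega}} f(|\sigma|) \mu[\sigma]^2 = \sum_{\sigma \in 2^{<\omega}} f(|\sigma|) \mu^2[\sigma],
\]
which is the desired identity.

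There is essentially no obstacle here; the only subtlety worth noting is the notational convention that $\mu^2[\sigma]$ means $(\mu[\sigma])^2$ rather than the product measure evaluated at $[\sigma]$ (although, suggestively, the latter interpretation on the cylinder $[\sigma]\times[\sigma]$ also yields $(\mu[\sigma])^2$). Since both sides of the identity in Lemma~\ref{lem:two-measures} are nonnegative, no integrability concerns arise in the substitution $\nu=\mu$, and the interchange of sum and integral already carried out in the proof of Lemma~\ref{lem:two-measures} via Tonelli handles everything.
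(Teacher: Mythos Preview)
Your proposal is correct and follows exactly the approach the paper intends: the paper does not give a separate proof, merely remarking that taking $\nu=\mu$ in Lemma~\ref{lem:two-measures} yields the result. Your added commentary on the notation $\mu^2[\sigma]$ and the nonnegativity/Tonelli point is accurate but not needed for this immediate corollary.
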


We are ready for the definition of $f$-energy randomness.

\begin{defn}
A sequence $X\in 2^\omega$ is \emph{$f$-energy random} if it is random for some $\mu$ such that $\Ef(\mu)<\infty$.
\end{defn}

\begin{prop}
Assume that $f\colon\omega\to[0,\infty)$ is computable. If $X$ is $\mu$-random and $\Ef(\mu)<\infty$, then
\begin{enumerate}
	\item\label{it:one} $\displaystyle\sum_{n\in\omega} f(n)\mu[X\uh n] < \infty$, and
	\item\label{it:two} $\displaystyle\sum_{n\in\omega} f(n)2^{-\KM(X\uh n)} < \infty$.
\end{enumerate}
\end{prop}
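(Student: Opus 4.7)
For (\ref{it:one}), the plan is to apply a standard integral-test argument relativized to $\mu$. The partial sums $S_N(X) = \sum_{n \leq N} f(n)\mu[X\uh n]$ are constant on cylinders of length $N$ and depend computably on $f$ (computable) and on $\mu[\sigma]$ (a $\mu$-computable real), so each $S_N$ is $\mu$-computable. Since $\Pf\mu(X)$ is their non-decreasing pointwise limit, it is lower semicomputable relative to $\mu$, and therefore the classes $U_k = \{X : \Pf\mu(X) > 2^k\}$ form a uniformly $\Sigma^0_1[\mu]$ sequence. Markov's inequality applied to the hypothesis $\Ef(\mu) < \infty$ gives $\mu(U_k) \leq 2^{-k}\Ef(\mu)$, and fixing any integer $c$ with $2^c \geq \Ef(\mu)$, the shifted sequence $V_n = U_{n+c}$ is a genuine $\mu$-test. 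Since $X$ is $\mu$-random it passes this test, so $X \notin U_k$ for some $k$, which gives $\Pf\mu(X) \leq 2^k < \infty$.

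For (\ref{it:two}), the plan is to reduce to (\ref{it:one}) using the characterization of $\mu$-randomness by $\KM^\mu$ proved just above. That proposition provides a constant $C$ with $\mathbf{M}^\mu(X\uh n) \leq 2^C \mu[X\uh n]$ for all $n$. The unrelativized universal semimeasure $\mathbf{M}$ is left-c.e., hence also left-c.e.\ relative to $\mu$, so by the universality of $\mathbf{M}^\mu$ among $\mu$-left-c.e.\ semimeasures there is a constant $d > 0$ with $d\,\mathbf{M}(\sigma) \leq \mathbf{M}^\mu(\sigma)$ for every $\sigma$. Chaining the two inequalities along $X$ yields $2^{-\KM(X\uh n)} \leq (2^C/d)\,\mu[X\uh n]$, so multiplying by $f(n)$ and summing bounds the series in (\ref{it:two}) by $(2^C/d)$ times the series in (\ref{it:one}), which is finite.

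The only real subtlety is the lower semicomputability of $\Pf\mu$ relative to $\mu$ used in (\ref{it:one}); once this is in hand, the $\mu$-test is routine and (\ref{it:two}) follows in two lines from the already-established $\KM^\mu$ characterization of $\mu$-randomness together with the universality of $\mathbf{M}^\mu$. No further combinatorics or coding arguments are needed.
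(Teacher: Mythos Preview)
Your proposal is correct and follows essentially the same approach as the paper: part~(1) is the same Markov/integral-test argument (the paper absorbs your shift constant $c$ into the threshold, writing $U_n = \{X : \Pf\mu(X) > c\,2^n\}$ directly), and part~(2) is the same comparison of $2^{-\KM(X\uh n)}$ with $\mu[X\uh n]$ via the $\KM^\mu$ characterization of $\mu$-randomness together with $\mathbf{M} \leq O(1)\cdot\mathbf{M}^\mu$. The paper collapses your two-step chain in~(2) into a single line by invoking the earlier remark that $\KM^\mu \leq \KM + O(1)$, but the content is identical.
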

\begin{proof}
Fix $c$ such that
\[
\Ef(\mu) = \int \sum_{n\in\omega} f(n)\mu[X\uh n] \;d\mu(X) < c.
\]
For each $n$, consider the $\Sigma^0_1[\mu]$-class $U_n = \{X\colon \sum_{n\in\omega} f(n)\mu[X\uh n] > c2^n\}$. We have $\mu(U_n)\leq 2^{-n}$, so $\{U_n\}_{n\in\omega}$ is a $\mu$-test. So if $\sum_{n\in\omega} f(n)\mu[X\uh n] = \infty$, then $X$ is not random for $\mu$. This proves $\eqref{it:one}$.

If $X$ is random for $\mu$, then there is a $c$ such that $\KM(X\uh n)\geq -\log(\mu[X\uh n])-c$. In other words, $2^{-\KM(X\uh n)}\leq 2^c\mu[X\uh n]$. Combining this with $\eqref{it:one}$:
\[
\sum_{n\in\omega} f(n)2^{-\KM(X\uh n)} \leq 2^c\sum_{n\in\omega} f(n)\mu[X\uh n] < \infty. \qedhere
\]
\end{proof}

\begin{cor}\label{cor:one-direction}
If $f\colon\omega\to[0,\infty)$ is computable and $X\in 2^\omega$ is $f$-energy random, then $\displaystyle\sum_{n\in\omega} f(n)2^{-\KM(X\uh n)} < \infty$.
\end{cor}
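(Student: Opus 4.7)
The plan is to observe that this corollary is essentially an immediate unpacking of the definition of $f$-energy randomness combined with part \eqref{it:two} of the preceding proposition. First I would note that, by the definition of $f$-energy randomness, the hypothesis that $X \in 2^\omega$ is $f$-energy random gives us a (possibly noncomputable) measure $\mu$ on $2^\omega$ such that $X$ is $\mu$-random and $\Ef(\mu) < \infty$. Since $f$ is assumed to be computable, the hypotheses of the preceding proposition are satisfied for this particular $\mu$.

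Then I would simply apply item \eqref{it:two} of that proposition directly to conclude that
\[
\sum_{n\in\omega} f(n)\,2^{-\KM(X\uh n)} < \infty,
\]
which is the desired conclusion. There is no real obstacle here: the content of the corollary is already contained in the proposition, and the only thing the corollary does is repackage it in a form that refers only to $X$ and $f$, with the witnessing measure $\mu$ suppressed.

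One small point worth emphasizing in the write-up is that we use the unrelativized $\KM$ on the right-hand side, not $\KM^\mu$; this is justified because, as noted earlier in the paper, $\KM^\mu(\sigma) \leq \KM(\sigma) + O(1)$, so $2^{-\KM(X\uh n)} \leq O(1)\cdot 2^{-\KM^\mu(X\uh n)}$, and finiteness of the sum is preserved under multiplication by a constant. (In fact, the proof of the proposition already invoked $\KM$ rather than $\KM^\mu$, so no extra work is needed.) The proof is therefore a one-line invocation of the proposition.
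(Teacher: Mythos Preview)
Your proposal is correct and matches the paper's approach exactly: the corollary is stated in the paper without proof because it is an immediate consequence of part~\eqref{it:two} of the preceding proposition together with the definition of $f$-energy randomness, precisely as you describe. Your parenthetical remark about $\KM$ versus $\KM^\mu$ is also accurate but, as you note, unnecessary since the proposition already works with unrelativized $\KM$.
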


Our main theorem is a converse to Corollary~\ref{cor:one-direction} under a fairly weak assumption on $f$. There are a couple of uninteresting cases that we can settle now. Let $|f| = \sum_{n\in\omega} f(n)2^{-n}$. The following two remarks tell us that it is safe to restrict our attention to the case where $|f|\in(0,\infty)$.

\begin{rem}\label{rem:too-fast}
Assume that $|f|=\infty$. If $\mu$ is a nonzero measure, then
\begin{align*}
\Ef(\mu) &= \sum_{\sigma\in 2^{<\omega}} f(|\sigma|)\mu^2[\sigma] = \sum_{n\in\omega} f(n)2^{n}\sum_{\sigma\in 2^n}\frac{\mu^2[\sigma]}{2^n}\geq \sum_{n\in\omega} f(n)2^{n}\left(\sum_{\sigma\in 2^n}\frac{\mu[\sigma]}{2^n}\right)^2 \\
	&= \sum_{n\in\omega} f(n)2^{n}\frac{\mu^2[\lambda]}{2^{2n}} = \mu^2[\lambda]\sum_{n\in\omega} f(n)2^{-n} = \mu^2[\lambda]|f| = \infty,
\end{align*}
where the inequality follows from the convexity of $x\mapsto x^2$ and Jensen's inequality. Since no nonzero measure has finite $f$-energy, no sequence is $f$-energy random.

On the other hand, recall that $\KM(\sigma)\leq |\sigma|$ up to an additive constant. Therefore, up to a multiplicative constant,
\[
\sum_{n\in\omega} f(n)2^{-\KM(X\uh n)} \geq \sum_{n\in\omega} f(n)2^{-n} = |f| = \infty,
\]
for any sequence $X\in 2^\omega$.

So in the case that $|f| = \infty$, we have that $X\in 2^\omega$ is $f$-energy random if and only if $\sum_{n\in\omega} f(n)2^{-\KM(X\uh n)} < \infty$, since both conditions are impossible.
\end{rem}

\begin{rem}\label{rem:fin-support}
Now let us assume that $f$ has finite support. This includes the case where $|f|=0$, i.e., where $f$ is the zero function. We claim $X\in 2^\omega$ is $f$-energy random if and only if $\sum_{n\in\omega} f(n)2^{-\KM(X\uh n)} < \infty$, since both conditions are always true. To see this, take any $X\in 2^\omega$. It is clear that $\sum_{n\in\omega} f(n)2^{-\KM(X\uh n)} < \infty$. For the other condition, let $\mu$ be a measure that has an atom at $X$. It is easy to see that $X$ is random for $\mu$ and $\Ef(\mu)<\infty$, so $X$ is $f$-energy random.
\end{rem}

%%%%%%%%
\subsection{The motivating example: \texorpdfstring{\boldmath$s$}{s}-energy randomness}
\label{subsec:s-energy}
%%%%%%%%

Energy randomness was introduced by Diamondstone and Kjos-Hanssen \cite{DK:12} in the form of $s$-energy randomness. Fix $s\in(0,1]$. Let $\rho$ denote the standard metric $\rho(X,Y)=\inf \{2^{-n} : X\uh n = Y \uh n \}$ on $2^\omega$.

\begin{defn}
Let $\mu$ be a measure on $2^\omega$. The \emph{Riesz $s$-potential} of $\mu$ is the function $\RP\mu(X) = \int \rho(X,Y)^{-s} \;d\mu(Y)$. The \emph{Riesz $s$-energy} of $\mu$ is
\[
\RE(\mu) = \int \RP\mu(X) \;d\mu(X) = \iint \rho(X,Y)^{-s} \;d\mu(Y)d\mu(X).
\]
\end{defn}

Define $X\in 2^\omega$ to be \emph{$s$-energy random} if it is random for some $\mu$ such that $\RE(\mu)<\infty$. We can fit $s$-energy randomness into the framework from the previous section by using slightly different, but essentially equivalent, notions of energy and potential.

\begin{defn}
Let $\mu$ be a measure on $2^\omega$. The \emph{$s$-potential} of $\mu$ is the function $\Ps\mu(X) = \sum_{n\in\omega} 2^{sn}\mu[X\uh n]$. The \emph{$s$-energy} of $\mu$ is
\[
\Es(\mu) = \int \Ps\mu(X) \;d\mu(X) = \int \sum_{n\in\omega} 2^{sn}\mu[X\uh n] \;d\mu(X).
\]
\end{defn}

In other words, the $s$-potential is the $f$-potential for $f(n)=2^{sn}$. There is a simple relationship between $s$-potential and Riesz $s$-potential.

\begin{lem}
$\RP\mu(X) = 2^{-s}\mu(2^\omega) + (1-2^{-s})\Ps\mu(X)$.
\end{lem}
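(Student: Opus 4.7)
The plan is a direct computation based on stratifying the integration variable by its longest common prefix with $X$. For $Y \ne X$ we have $\rho(X,Y) = 2^{-n}$ exactly when $Y \in [X\uh n] \setminus [X\uh (n+1)]$, and $\rho(X,Y)^{-s}$ is constant on each such set, so Tonelli's theorem lets us rewrite
\[
\RP\mu(X) = \sum_{n=0}^{\infty} 2^{sn}\bigl(\mu[X\uh n] - \mu[X\uh(n+1)]\bigr).
\]
(If $\mu(\{X\}) > 0$, then $\rho(X,X) = 0$ forces $\RP\mu(X) = \infty$, and the bound $\mu[X\uh n] \geq \mu(\{X\})$ for all $n$ forces $\Ps\mu(X) = \infty$ as well, so the identity holds trivially; we may therefore assume $\mu$ has no atom at $X$.)

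Next I would split the above as two sums, re-indexing $k = n+1$ in the second, to get
\[
\RP\mu(X) = \sum_{n=0}^{\infty} 2^{sn}\mu[X\uh n] - 2^{-s}\sum_{k=1}^{\infty} 2^{sk}\mu[X\uh k] = \mu[\lambda] + (1 - 2^{-s})\sum_{n=1}^{\infty} 2^{sn}\mu[X\uh n].
\]
Since $\Ps\mu(X) = \mu[\lambda] + \sum_{n=1}^{\infty} 2^{sn}\mu[X\uh n]$ and $\mu[\lambda] = \mu(2^\omega)$, substituting yields
\[
\RP\mu(X) = \mu(2^\omega) + (1-2^{-s})\bigl(\Ps\mu(X) - \mu(2^\omega)\bigr) = 2^{-s}\mu(2^\omega) + (1-2^{-s})\Ps\mu(X),
\]
which is the desired identity.

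There is no real obstacle here. Since every quantity in sight is nonnegative, Tonelli justifies both the interchange of sum and integral and the rearrangement of the resulting series, whether the sums are finite or infinite. The only piece of bookkeeping is disposing of the atom-at-$X$ case, after which the argument collapses to a one-line Abel-style telescoping.
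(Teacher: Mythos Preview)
Your argument is essentially correct and is the ``shell'' version of the paper's proof, but one step is not quite justified as written. When you split
\[
\sum_{n\ge 0} 2^{sn}\bigl(\mu[X\uh n]-\mu[X\uh(n+1)]\bigr)
= \sum_{n\ge 0} 2^{sn}\mu[X\uh n]\;-\;2^{-s}\sum_{k\ge 1} 2^{sk}\mu[X\uh k],
\]
both sums on the right are $\Ps\mu(X)$ up to a bounded correction, and if $\Ps\mu(X)=\infty$ the right side is the indeterminate form $\infty-\infty$. Tonelli lets you rearrange nonnegative terms, but this step is a genuine subtraction, so your blanket appeal to Tonelli does not cover it. The fix is easy: either assume $\Ps\mu(X)<\infty$ for the algebra (and note separately that $\Ps\mu(X)=\infty$ forces $\RP\mu(X)=\infty$, e.g.\ by the identity $2^{sn}=2^{-s}+(1-2^{-s})\sum_{k\le n}2^{sk}$ and Tonelli with all terms nonnegative), or carry the boundary term $2^{sN}\mu[X\uh(N{+}1)]$ through a finite Abel summation and pass to the limit.

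For comparison, the paper performs the telescoping \emph{before} integrating, writing pointwise
\[
\rho(X,Y)^{-s}
= 2^{-s} + (1-2^{-s})\sum_{n\ge 0} 2^{sn}\chi_{[X\uh n]}(Y),
\]
which is a sum of nonnegative terms valid even at $Y=X$. Integrating against $d\mu(Y)$ with Tonelli then gives the lemma directly, with no need to isolate the atom case or to subtract divergent series. Your shell decomposition plus Abel summation is really the same computation run in the other order; the paper's ordering just happens to keep every intermediate quantity nonnegative.
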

\begin{proof}
We can rewrite $\rho(X,Y)^{-s}$ as
\[
2^{-s} + \sum_{n\in\omega} (2^{ns}-2^{(n-1)s})\chi_{[X\uh n]}(Y) = 2^{-s} + (1-2^{-s})\sum_{n\in\omega} 2^{ns}\chi_{[X\uh n]}(Y).
\]
Therefore,
\begin{align*}
\RP\mu(X) &= \int \rho(X,Y)^{-s} \;d\mu(Y) = \int 2^{-s} + (1-2^{-s})\sum_{n\in\omega} 2^{ns}\chi_{[X\uh n]}(Y) \;d\mu(Y) \\
	&= 2^{-s}\mu(2^\omega) + (1-2^{-s})\sum_{n\in\omega} 2^{ns}\int\chi_{[X\uh n]}(Y) \;d\mu(Y) \\
	&= 2^{-s}\mu(2^\omega) + (1-2^{-s})\sum_{n\in\omega} 2^{ns}\mu[X\uh n] = 2^{-s}\mu(2^\omega) + (1-2^{-s})\Ps\mu(X). \qedhere
\end{align*}
\end{proof}

It follows that:

\begin{prop}
$\RE(\mu) = 2^{-s}\mu^2(2^\omega) + (1-2^{-s})\Es(\mu)$. In particular, $\RE(\mu)<\infty$ if and only if $\Es(\mu)<\infty$.
\end{prop}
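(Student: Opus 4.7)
The plan is to derive the identity by integrating the preceding lemma with respect to $d\mu(X)$. By definition, $\RE(\mu) = \int \RP\mu(X)\,d\mu(X)$, and the lemma rewrites $\RP\mu(X)$ as the sum of a constant term $2^{-s}\mu(2^\omega)$ and the term $(1-2^{-s})\Ps\mu(X)$. Integrating the constant term against $d\mu(X)$ produces an extra factor of $\mu(2^\omega)$, giving $2^{-s}\mu^2(2^\omega)$, while integrating the second term against $d\mu(X)$ produces, by the definition of $\Es(\mu)$, exactly $(1-2^{-s})\Es(\mu)$.

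For the ``in particular'' claim, I would observe that $\mu$ is a finite measure, so $2^{-s}\mu^2(2^\omega)$ is a finite nonnegative constant. Since $s \in (0,1]$, the coefficient $1-2^{-s}$ is strictly positive, so finiteness of $\RE(\mu)$ is equivalent to finiteness of $\Es(\mu)$.

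There is essentially no obstacle here: the only thing to be mildly careful about is the use of Tonelli to move the integral inside the sum defining $\Ps\mu$, but all integrands are nonnegative so this is automatic, and in any case it was already used implicitly in the previous lemma.
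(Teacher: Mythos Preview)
Your proposal is correct and matches the paper's approach: the paper simply writes ``It follows that:'' before the proposition, leaving the integration of the preceding lemma implicit, which is exactly what you do. Your handling of the ``in particular'' clause is also correct and appropriately careful about the sign of $1-2^{-s}$.
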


\begin{cor}\label{cor:s-energy-f-energy}
$X\in 2^\omega$ is $s$-energy random if and only if it is $f$-energy random for $f(n)=2^{sn}$.
\end{cor}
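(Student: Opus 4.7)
The plan is to observe that the corollary is essentially an immediate consequence of the preceding proposition together with a direct comparison of definitions. First, I would note that with $f(n) = 2^{sn}$, the $f$-potential $\Pf\mu(X) = \sum_{n\in\omega} f(n)\mu[X\uh n]$ is literally the same function as the $s$-potential $\Ps\mu(X) = \sum_{n\in\omega} 2^{sn}\mu[X\uh n]$, and consequently $\Ef(\mu) = \Es(\mu)$ from the respective definitions.

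Given this identification, the proof proceeds by matching the two notions of randomness via the collection of admissible witnessing measures. By definition, $X$ is $s$-energy random iff there exists a measure $\mu$ with $X$ random for $\mu$ and $\RE(\mu) < \infty$, while $X$ is $f$-energy random for $f(n) = 2^{sn}$ iff there exists a measure $\mu$ with $X$ random for $\mu$ and $\Ef(\mu) = \Es(\mu) < \infty$. The preceding proposition, $\RE(\mu) = 2^{-s}\mu^2(2^\omega) + (1-2^{-s})\Es(\mu)$, tells us that finiteness of $\RE(\mu)$ is equivalent to finiteness of $\Es(\mu)$, since $\mu^2(2^\omega)$ is always finite for a finite Borel measure and $1 - 2^{-s} > 0$ because $s \in (0,1]$.

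Therefore, the class of measures witnessing $s$-energy randomness of $X$ coincides with the class of measures witnessing $f$-energy randomness of $X$ for $f(n) = 2^{sn}$, and the equivalence follows. There is no real obstacle here: the entire content was already packaged in the relationship between $\RP\mu$ and $\Ps\mu$ established in the preceding lemma and proposition, so the corollary amounts to unpacking the two definitions and invoking that equivalence.
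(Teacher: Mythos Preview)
Your proposal is correct and follows exactly the same route as the paper: the corollary is stated immediately after the proposition $\RE(\mu) = 2^{-s}\mu^2(2^\omega) + (1-2^{-s})\Es(\mu)$, and the paper leaves it without proof because, as you observe, it amounts to unpacking the two definitions and using that $\RE(\mu)<\infty$ iff $\Es(\mu)<\infty$ together with the identity $\Es = \Ef$ for $f(n)=2^{sn}$.
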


Note that we included the case when $s=1$, but if $f(n)=2^n$, then $|f| = \infty$. So by Remark~\ref{rem:too-fast}, no real can be $1$-energy random.

For the case $s=0$, it is conventional to define the Riesz $s$-potential and Riesz $s$-energy differently than in the $s>0$ case.  This is done via logarithmic potential and logarithmic energy.

\begin{defn}
Let $\mu$ be a measure on $2^\omega$. The \emph{Riesz $\log^k$-potential} of $\mu$ is the function $\RPklog\mu(X) = \int (-\log_2 \rho(X,Y))^k \;d\mu(Y)$. The \emph{Riesz   $\log^k$-energy} of $\mu$ is
\[
\REklog(\mu) = \int \RPklog\mu(X) \;d\mu(X) = \iint (-\log_2 \rho(X,Y))^k \;d\mu(Y)d\mu(X).
\]
In the case $k=1$, we drop the $k$, writing, say, $\log$-energy instead of $\log^k$-energy.
\end{defn}

Define $X\in 2^\omega$ to be \emph{$\log^k$-energy random} if it is random for some $\mu$ such that $\REklog(\mu)<\infty$. Again, we can fit $\log^k$-energy randomness into the framework of $f$-energy randomness.

\begin{lem}
$\RPklog\mu(X) = \Pf\mu(X)$ for $f(n) = n^k-{(n-1)^k}$ (where we take $f(0)=0$).
\end{lem}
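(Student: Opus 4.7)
The plan is to imitate the proof of the corresponding lemma for $\RP$ by expanding the kernel $(-\log_2\rho(X,Y))^k$ as a nonnegative series of indicator functions $\chi_{[X\uh m]}(Y)$, and then swapping sum and integral. The key observation is that $-\log_2 \rho(X,Y)$ is exactly the length $n$ of the longest common prefix of $X$ and $Y$ (with the convention $n=\infty$ when $X=Y$). So the kernel takes the value $n^k$, and the challenge is to write $n^k$ in terms of the indicators $\chi_{[X\uh m]}(Y)$, which fire precisely when $m$ does not exceed that common prefix length.

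First I would note the telescoping identity $n^k = \sum_{m=1}^{n}\bigl(m^k-(m-1)^k\bigr)$, valid for every $n\in\omega$. Since $\chi_{[X\uh m]}(Y) = 1$ iff $m$ is at most the common prefix length of $X$ and $Y$, this gives the pointwise identity
\[
(-\log_2\rho(X,Y))^k \;=\; \sum_{m\in\omega} f(m)\,\chi_{[X\uh m]}(Y),
\]
with $f(m) = m^k - (m-1)^k$ and $f(0)=0$, for all $Y \neq X$; at $Y=X$ both sides are $+\infty$, so the identity in fact holds everywhere in $[0,\infty]$.

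Second, I would integrate this identity against $d\mu(Y)$. All summands are nonnegative, so Tonelli's theorem justifies interchanging sum and integral:
\[
\RPklog\mu(X) \;=\; \int \sum_{m\in\omega} f(m)\chi_{[X\uh m]}(Y)\,d\mu(Y) \;=\; \sum_{m\in\omega} f(m)\,\mu[X\uh m] \;=\; \Pf\mu(X).
\]
This is exactly the claimed equality.

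There is no real obstacle here; the content of the lemma is just the telescoping trick together with the geometric interpretation of $\rho$ on $2^\omega$. The only point to be mindful of is the convention $f(0)=0$, which matches the fact that $\chi_{[X\uh 0]}(Y)=1$ for every $Y$ and would otherwise contribute a spurious constant to $\Pf\mu(X)$.
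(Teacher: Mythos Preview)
Your proof is correct and follows essentially the same approach as the paper: rewrite $(-\log_2\rho(X,Y))^k$ via the telescoping sum $\sum_{m\geq 1}(m^k-(m-1)^k)\chi_{[X\uh m]}(Y)$, then integrate term by term. You spell out the justification (Tonelli, the $Y=X$ case) more explicitly than the paper does, but the argument is the same.
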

\begin{proof}
We can rewrite $(-\log_2 \rho(X,Y))^k$ as
\[
\sum_{n=1}^\infty \left(n^k-{(n-1)^k}\right)\chi_{[X\uh n]}(Y).
\]
Therefore,
\begin{align*}
\RPklog\mu(X) &= \int (-\log_2 \rho(X,Y))^k \;d\mu(Y) \\
	&= \int \sum_{n=1}^\infty \left(n^k-{(n-1)^k}\right)\chi_{[X\uh n]}(Y) \;d\mu(Y) \\
	&= \sum_{n=1}^\infty \left(n^k-{(n-1)^k}\right) \int\chi_{[X\uh n]}(Y) \;d\mu(Y) \\
	&= \sum_{n=1}^\infty \left(n^k-{(n-1)^k}\right) \mu[X\uh n].\qedhere
\end{align*}
\end{proof}

Since $n^k-{(n-1)^k}\sim n^{k-1}$, we have the following:

\begin{prop}
$\REklog(\mu)<\infty$ if and only if $\Ef(\mu)<\infty$ for $f(n)=n^{k-1}$.
\end{prop}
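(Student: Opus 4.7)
The plan is to combine the preceding lemma with a straightforward asymptotic comparison of coefficients. By the lemma, integrating $\RPklog\mu(X)$ against $\mu$ gives
\[
\REklog(\mu) = \int \Pf\mu(X)\;d\mu(X) = \Ef(\mu) \quad \text{for } g(n) = n^k - (n-1)^k \text{ (with } g(0)=0\text{).}
\]
So the task reduces to showing that $\Eg(\mu) < \infty$ if and only if $\Eh(\mu) < \infty$, where $h(n) = n^{k-1}$.

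By Proposition~\ref{prop:energy-characerization}, both energies are nonnegative sums of the form $\sum_{\sigma\in 2^{<\omega}} f(|\sigma|)\mu^2[\sigma]$. Hence it suffices to show that the coefficients $g(n)$ and $h(n)$ are comparable up to a multiplicative constant for $n \geq 1$ (the single term at $n=0$ contributes at most $h(0)\mu^2[\lambda]$ to $\Eh(\mu)$, which is finite). For $n \geq 1$, the mean value theorem (or the binomial expansion) gives $n^k - (n-1)^k = k\xi^{k-1}$ for some $\xi \in (n-1,n)$, so there exist constants $c_1, c_2 > 0$ (depending only on $k$) with
\[
c_1 n^{k-1} \leq n^k - (n-1)^k \leq c_2 n^{k-1} \quad \text{for all } n \geq 1.
\]
Therefore
\[
c_1 \sum_{|\sigma| \geq 1} h(|\sigma|)\mu^2[\sigma] \;\leq\; \Eg(\mu) \;\leq\; c_2 \sum_{|\sigma| \geq 1} h(|\sigma|)\mu^2[\sigma],
\]
and since the missing $n=0$ term is bounded, $\Eg(\mu)$ and $\Eh(\mu)$ are simultaneously finite.

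There is no real obstacle here: the entire argument is bookkeeping once the previous lemma has been applied. The only subtlety is ensuring the asymptotic $n^k - (n-1)^k \sim k n^{k-1}$ is upgraded to a two-sided multiplicative bound valid for all $n \geq 1$, which is automatic from the mean value theorem when $k \geq 1$, and ensuring the $n=0$ term does not cause trouble in the comparison.
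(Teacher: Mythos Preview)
Your proof is correct and follows the same approach as the paper, which simply invokes the asymptotic $n^k - (n-1)^k \sim n^{k-1}$ (more precisely, $\sim k\,n^{k-1}$) after the preceding lemma. You have merely filled in the details of that one-line remark, including the careful handling of the $n=0$ term and the upgrade of the asymptotic to a two-sided bound via the mean value theorem.
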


\begin{cor}\label{cor:log-energy-equiv}
$X\in 2^\omega$ is $\log^k$-energy random if and only if it is $f$-energy random for $f(n)=n^{k-1}$.
\end{cor}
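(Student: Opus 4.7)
The plan is to derive Corollary~\ref{cor:log-energy-equiv} immediately from the Proposition that precedes it, which asserts that for every measure $\mu$ on $2^\omega$, $\REklog(\mu)<\infty$ if and only if $\Ef(\mu)<\infty$ for $f(n)=n^{k-1}$. The pattern exactly mirrors the derivation of Corollary~\ref{cor:s-energy-f-energy} from its companion proposition, so only a brief unpacking of the definitions is needed.

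Spelling this out: by definition, $X$ is $\log^k$-energy random iff there exists a measure $\mu$ such that $X$ is $\mu$-random and $\REklog(\mu)<\infty$, whereas $X$ is $f$-energy random iff there exists a measure $\mu$ such that $X$ is $\mu$-random and $\Ef(\mu)<\infty$. Since the preceding Proposition guarantees that these two finiteness conditions pick out exactly the same measures, the two existential statements have the same set of witnesses, and so they characterize the same class of sequences $X\in 2^\omega$. This is precisely the content of the corollary.

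The substantive content --- namely the asymptotic comparison $n^k-(n-1)^k\sim n^{k-1}$, which reduces the Riesz $\log^k$-energy of $\mu$ to the $f$-energy with $f(n)=n^{k-1}$ up to multiplicative constants (and hence preserves finiteness) --- is already packaged into the Lemma and Proposition immediately above. For the finiteness conclusion one only needs a two-sided bound $c_1 n^{k-1}\le n^k-(n-1)^k\le c_2 n^{k-1}$ valid for large $n$, with the finitely many small-$n$ terms being irrelevant; in particular, any convention for $f(0)$ (the only mildly awkward case, when $k=1$) has no effect. Consequently there is no real obstacle at this stage; the corollary is simply the translation of the Proposition from ``finiteness of energy of some measure $\mu$'' to ``randomness of $X$ with respect to some measure $\mu$ of finite energy.''
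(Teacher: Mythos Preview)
Your proposal is correct and follows exactly the paper's approach: the corollary is stated immediately after the Proposition asserting $\REklog(\mu)<\infty\iff\Ef(\mu)<\infty$ for $f(n)=n^{k-1}$, with no further proof given, and your unpacking of the definitions is precisely the intended (trivial) passage from that Proposition to the corollary.
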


For Riesz $s$-potential, the $s=0$ case is conventionally defined to be the \emph{$\log$-potential}, that is Riesz $\log^k$-potential for $k=1$.  By the previous corollary, $X$ is $\log$-energy random if and only if $X$ is $f$-energy random for $f(n)=1$.  This agrees with the characterization of $s$-energy randomness as $f$-energy randomness for $f(n)=2^{sn}$.

However, note that $\log$-energy randomness is not equivalent to being random for some $\mu$ with $\iint \rho(X,Y)^{-0} \;d\mu(X) d\mu(Y) < \infty$.  Indeed, every $X$ has this property, since every $X$ is random for some $\mu$ and
\[
\iint \rho(X,Y)^{-0} \;d\mu(X) d\mu(Y) = \iint 1 \;d\mu(X) d\mu(Y) = \mu^2[\lambda] < \infty.
\]
On the other hand, if $X$ is computable, then any $\mu$ for which $X$ is $\mu$-random must have $X$ as an atom.  If $\mu$ has atoms, then $\RElog(\mu)=\infty$. Therefore, computable $X$ cannot be $\log$-energy random.

%%%%%%%%
%%%%%%%%
\section{Capacity}
\label{sec:capacity}
%%%%%%%%
%%%%%%%%

Once again, fix a function $f\colon\omega\to[0,\infty)$. In this section, we investigate $f$-capacity, which is a way of assigning weights to subsets of $2^\omega$. Following Remark~\ref{rem:too-fast}, assume that $|f| = \sum_{n\in\omega} f(n)2^{-n} < \infty$. (It is not hard to see that if $|f| = \infty$, then all sets have $f$-capacity $0$, so this is not an interesting case.) We also assume that $f$ has infinite support (and so $|f|>0$). This is safe by Remark~\ref{rem:fin-support}.

Let $\+M$ be the space of finite Borel measures on $2^\omega$.

\begin{defn}
The \emph{$f$-capacity} of $U\subseteq 2^\omega$ is
\[
\Cf(U) = \inf\;\{\mu(2^\omega)\colon \mu\in\+M\text{ and }(\forall X\in U)\; \Pf\mu(X)\geq 1\}.
\]
Say that $\mu$ \emph{realizes} $\Cf(U)$ if $\mu(2^\omega) = \Cf(U)$ and $(\forall X\in U)\; \Pf\mu(X)\geq 1$.
\end{defn}

We start by proving some basic properties of $\Cf$.

\begin{lem}\label{lem:cap-basic}
Let $U,V\subseteq 2^\omega$.
\begin{enumerate}
	\item\label{it:empty} $\Cf(\emptyset) = 0$.
	\item\label{it:monotone} If $U\subseteq V$, then $\Cf(U)\leq\Cf(V)$.
	\item\label{it:geq-sup} $\displaystyle\Cf(U) \geq \sup\;\{\nu(U)\colon \nu\in\+M\text{ and }(\forall Y)\; \Pf\nu(Y)\leq 1\}$.
	\item\label{it:full} $\displaystyle\Cf(2^\omega) = 1/|f|$. Moreover, $\Cf(2^\omega)$ is realized by the uniform measure $\mu_{1/|f|}$ such that $\mu_{1/|f|}(2^\omega) = 1/|f|$.
\end{enumerate}
\end{lem}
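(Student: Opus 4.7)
The plan is to handle the four items in order, observing that (\ref{it:empty})--(\ref{it:monotone}) are essentially definitional, while (\ref{it:geq-sup}) is the conceptual heart via Lemma~\ref{lem:two-measures}, and (\ref{it:full}) follows from (\ref{it:geq-sup}) together with a direct computation for the uniform measure.

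For (\ref{it:empty}), I observe that the condition $(\forall X \in \emptyset)\; \Pf\mu(X)\geq 1$ is vacuously satisfied by the zero measure, which has total mass $0$. For (\ref{it:monotone}), if $U\subseteq V$ then every $\mu$ admissible for $\Cf(V)$ is also admissible for $\Cf(U)$, so the infimum over the (possibly) larger admissible class is at most the infimum over the smaller one.

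The main step is (\ref{it:geq-sup}), which is a weak duality between the two extremal problems. I fix arbitrary admissible measures $\mu$ (with $\Pf\mu(X)\geq 1$ on $U$) and $\nu$ (with $\Pf\nu(Y)\leq 1$ everywhere), and chain
\[
\nu(U) \leq \int_U \Pf\mu(X)\,d\nu(X) \leq \int \Pf\mu(X)\,d\nu(X) = \int \Pf\nu(X)\,d\mu(X) \leq \mu(2^\omega),
\]
where the first inequality uses $\Pf\mu\geq 1$ on $U$, the second that $\Pf\mu\geq 0$, the middle equality is Lemma~\ref{lem:two-measures} (both sides equal $\sum_\sigma f(|\sigma|)\mu[\sigma]\nu[\sigma]$), and the last uses $\Pf\nu\leq 1$ together with $\mu\geq 0$. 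Taking the infimum over admissible $\mu$ on the right and the supremum over admissible $\nu$ on the left gives the desired inequality. This step is the only one that requires genuine content, and I expect it to be the main (mild) obstacle.

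For (\ref{it:full}), let $\mu_c$ denote the uniform measure with $\mu_c[\sigma] = c\, 2^{-|\sigma|}$ for all $\sigma$. Then $\Pf\mu_c(X) = \sum_n f(n)\, c\, 2^{-n} = c\,|f|$ is constant, so $\mu_{1/|f|}$ satisfies $\Pf\mu_{1/|f|}\equiv 1$ and is therefore admissible in the definition of $\Cf(2^\omega)$, yielding $\Cf(2^\omega)\leq 1/|f|$ and showing that $\mu_{1/|f|}$ realizes the capacity as soon as we know the infimum equals its total mass. For the matching lower bound, the same measure $\mu_{1/|f|}$ is admissible in the supremum of (\ref{it:geq-sup}) applied to $U = 2^\omega$, giving $\Cf(2^\omega) \geq \mu_{1/|f|}(2^\omega) = 1/|f|$. (The assumption $0<|f|<\infty$ is used here to ensure the uniform measure exists and has finite nonzero mass.)
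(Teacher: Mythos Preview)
Your proof is correct and follows essentially the same approach as the paper: parts~(\ref{it:empty}) and~(\ref{it:monotone}) are noted as immediate from the definition, part~(\ref{it:geq-sup}) is proved via the same duality chain using Lemma~\ref{lem:two-measures} (you just insert the intermediate step $\int_U \Pf\mu\,d\nu$ that the paper leaves implicit), and part~(\ref{it:full}) is obtained exactly as in the paper by checking $\Pf\mu_{1/|f|}\equiv 1$ and then applying~(\ref{it:geq-sup}) with $\nu=\mu_{1/|f|}$.
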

\begin{proof}
\eqref{it:empty} and \eqref{it:monotone} are immediate from the definition of $\Cf$.

For \eqref{it:geq-sup}, let $\mu$ be any measure such that $(\forall X\in U)\; \Pf\mu(X)\geq 1$ and let $\nu$ be any measure such that $(\forall Y)\; \Pf\nu(Y)\leq 1$. Using Lemma~\ref{lem:two-measures},
\[
\mu(2^\omega) \geq \int \Pf\nu(Y) \;d\mu(Y) = \sum_{\sigma\in 2^{<\omega}} f(|\sigma|)\mu[\sigma]\nu[\sigma] = \int \Pf\mu(X) \;d\nu(X) \geq \nu(U).
\]
So $\Cf(U)\geq \sup\;\{\nu(U)\colon (\forall Y)\; \Pf\nu(Y)\leq 1\}$.

For \eqref{it:full}, let $\mu_{1/|f|}$ be the uniform measure such that $\mu_{1/|f|}(2^\omega)=1/|f|$. In other words, $\mu_{1/|f|}[\sigma] = 2^{-|\sigma|}/|f|$. Note that for all $X\in 2^\omega$ we have
\[
\Pf\mu_{1/|f|}(X) = \sum_{n\in\omega} f(n)\mu_{1/|f|}[X\uh n] = \frac{1}{|f|}\sum_{n\in\omega} f(n)2^{-n} = 1.
\]
Therefore, by definition $\Cf(2^\omega) \leq \mu_{1/|f|}(2^\omega) = 1/|f|$. But by \eqref{it:geq-sup} we also have $\Cf(2^\omega) \geq \mu_{1/|f|}(2^\omega) = 1/|f|$. 
\end{proof}

In order to give inductive proofs and and recursive definitions involving $f$-capacity, we need notation for the capacities associated to the shifts of $f$. For $k\in\omega$, let $f_k$ be the function defined by $f_k(n) = f(n+k)$. Note that $|f_k|\leq 2^k|f|<\infty$ and $f_k$ has infinite support, so our assumptions about $f$ are preserved.

If $U\subseteq 2^\omega$, let $U_i = \{X\colon iX\in U\}$. The next lemma give us a relationship between $\Cf[k](U)$, $\Cf[k+1](U_0)$ and $\Cf[k+1](U_1)$.

\begin{lem}\label{lem:cap-relation}
For $k\in\omega$, $\displaystyle\Cf[k](U) = \frac{\Cf[k+1](U_0)+\Cf[k+1](U_1)}{1+f(k)\left(\Cf[k+1](U_0)+\Cf[k+1](U_1)\right)}$.
\end{lem}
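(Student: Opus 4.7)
The plan is to prove the identity by analyzing how the $f_k$-potential decomposes when one restricts to the two subtrees above $0$ and $1$. Given a measure $\mu$ on $2^\omega$, define the shifted measure $\tilde\mu_i$ on $2^\omega$ by $\tilde\mu_i[\sigma] = \mu[i\sigma]$ for $i \in \{0,1\}$; then $\mu(2^\omega) = \tilde\mu_0(2^\omega) + \tilde\mu_1(2^\omega)$, and conversely every pair of measures on $2^\omega$ assembles into a single measure on $2^\omega$ this way. Reindexing the series defining $\Pf[k]\mu(iX)$ (splitting off the $n=0$ term and using $f_k(n+1)=f_{k+1}(n)$) yields the key decomposition identity
\[
\Pf[k]\mu(iX) = f(k)\,\mu(2^\omega) + \Pf[k+1]\tilde\mu_i(X),
\]
valid for $i\in\{0,1\}$ and $X\in 2^\omega$. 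Abbreviate $b_i=\Cf[k+1](U_i)$ and let $s^\star$ denote the right-hand side of the claimed equation; equivalently, $s^\star$ is the unique solution of $s^\star = (1-f(k)s^\star)(b_0+b_1)$.

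For the upper bound $\Cf[k](U)\leq s^\star$, I would fix $\epsilon>0$ and select measures $\nu_0,\nu_1$ on $2^\omega$ with $\nu_i(2^\omega)=b_i+\epsilon$ and $\Pf[k+1]\nu_i(X)\geq 1$ on $U_i$; if the nearly-optimal witnesses have slightly less total mass, one pads with a small multiple of the uniform measure. Set $t = (b_0+b_1+2\epsilon)/(1+f(k)(b_0+b_1+2\epsilon))$, so that $(1-f(k)t)(b_0+b_1+2\epsilon)=t$. Now assemble the scaled measures $(1-f(k)t)\nu_0$ and $(1-f(k)t)\nu_1$ into a single measure $\mu$ on $2^\omega$; its total mass is exactly $t$, and the decomposition identity gives $\Pf[k]\mu(iX) \geq f(k)t + (1-f(k)t)\cdot 1 = 1$ for every $iX\in U$. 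Sending $\epsilon\to 0$ yields $\Cf[k](U)\leq s^\star$.

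For the lower bound, let $\mu$ be any measure with $\Pf[k]\mu(X)\geq 1$ on $U$, and put $s=\mu(2^\omega)$. If $f(k)s\geq 1$, then $s\geq 1/f(k)$, and a brief algebraic check shows $1/f(k) > s^\star$. Otherwise $1-f(k)s>0$, and the decomposition identity rearranges to $\Pf[k+1]\bigl(\tilde\mu_i/(1-f(k)s)\bigr)(X) \geq 1$ for all $X\in U_i$; the definition of capacity then forces $\tilde\mu_i(2^\omega)\geq (1-f(k)s)\,b_i$. Summing over $i\in\{0,1\}$ gives $s\geq (1-f(k)s)(b_0+b_1)$, and solving for $s$ produces $s\geq s^\star$.

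The main delicacy is the case split in the lower bound: the tempting argument that treats $(1-f(k)s)$ as a positive scaling factor can fail if $\mu$ is so large that $f(k)s\geq 1$, and one has to verify separately that such measures still satisfy the desired inequality. Apart from that, everything reduces to the one-line decomposition identity and elementary algebra.
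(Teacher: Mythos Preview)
Your proof is correct and follows essentially the same route as the paper: both hinge on the decomposition $\Pf[k]\mu(iX) = f(k)\mu(2^\omega) + \Pf[k+1]\tilde\mu_i(X)$, then scale the shifted measures by $1/(1-f(k)\mu(2^\omega))$ for the lower bound and assemble scaled near-optimal witnesses for the upper bound. The only cosmetic difference is your explicit case split when $f(k)s\geq 1$; the paper sidesteps this by first observing $\Cf[k](U)\leq\Cf[k](2^\omega)=1/|f_k|<1/f(k)$, so it may restrict attention from the outset to measures with total mass below $1/f(k)$.
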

\begin{proof}
It is sufficient to prove the lemma for $k=0$. Note that $\Cf(U)\leq \Cf(2^\omega) = 1/|f| < 1/f(0)$. So consider a measure $\mu$ such that $(\forall X\in U)\; \Pf\mu(X)\geq 1$ and $\mu[\lambda] < 1/f(0)$. Define $\nu_0$ such that $\nu_0[\sigma] = \mu[0\sigma]$. If $X\in U_0$, then
\begin{align*}
1 \leq \Pf\mu(0X) &= \sum_{n\in\omega} f(n)\mu[(0X)\uh n] = f(0)\mu[\lambda] + \sum_{n\in\omega} f(n+1)\mu[(0X)\uh (n+1)] \\
	&= f(0)\mu[\lambda] + \sum_{n\in\omega} f(n+1)\nu_0[X\uh n] = f(0)\mu[\lambda] + \Pf[1]\nu_0(X).
\end{align*}
So $\Pf[1]\nu_0(X)\geq 1-f(0)\mu[\lambda]$ for all $X\in U_0$. By assumption, $1-f(0)\mu[\lambda]>0$. Therefore, the measure $\mu_0 = \left(\frac{1}{1-f(0)\mu[\lambda]}\right)\nu_0$ has the property that $\Pf[1]\mu_0(X)\geq 1$ for all $X\in U_0$. Hence
\begin{align*}
\mu_0[\lambda] &= \left(\frac{1}{1-f(0)\mu[\lambda]}\right)\nu_0[\lambda] \geq \Cf[1](U_0), \\
\text{and so } \mu[0] = \nu_0[\lambda] &\geq (1-f(0)\mu[\lambda])\Cf[1](U_0).
\end{align*}
Similarly, $\mu[1] \geq (1-f(0)\mu[\lambda])\Cf[1](U_1)$. Adding these together, we get
\[
\mu[\lambda] \geq (1-f(0)\mu[\lambda])(\Cf[1](U_0)+\Cf[1](U_1)).
\]
Solving for $\mu[\lambda]$,
\[
\mu[\lambda] \geq \frac{\Cf[1](U_0)+\Cf[1](U_1)}{1+f(0)\left(\Cf[1](U_0)+\Cf[1](U_1)\right)}.
\]
This is true for any $\mu$ such that $(\forall X\in U)\; \Pf\mu(X)\geq 1$ and $\mu[\lambda]<1/f(0)$. But recall that $\Cf(U)<1/f(0)$, so
\[
\Cf(U)\geq \frac{\Cf[1](U_0)+\Cf[1](U_1)}{1+f(0)\left(\Cf[1](U_0)+\Cf[1](U_1)\right)}.
\]

For the other direction, we will build a measure $\mu$ approximating $\Cf(U)$. For each $i\in\{0,1\}$, let $\mu_i$ be a measure such that $\Pf[1]\mu_i(X)\geq 1$ for all $X\in U_i$. We define $\mu$ such that
\[
\mu[i\sigma] = \frac{\mu_i[\sigma]}{1+f(0)\left(\mu_0[\lambda]+\mu_1[\lambda]\right)}.
\]
In particular, $\displaystyle\mu[\lambda] = \frac{\mu_0[\lambda]+\mu_1[\lambda]}{1+f(0)\left(\mu_0[\lambda]+\mu_1[\lambda]\right)}$. Now take $X\in U_0$. We have
\begin{align*}
\Pf\mu(0X) &= f(0)\mu[\lambda] + \frac{\Pf[1]\mu_0(X)}{1+f(0)\left(\mu_0[\lambda]+\mu_1[\lambda]\right)} \\
	&\geq f(0)\left(\frac{\mu_0[\lambda]+\mu_1[\lambda]}{1+f(0)\left(\mu_0[\lambda]+\mu_1[\lambda]\right)}\right) + \frac{1}{1+f(0)\left(\mu_0[\lambda]+\mu_1[\lambda]\right)} = 1.
\end{align*}
Similarly, if $X\in U_1$, then $\Pf\mu(1X)\geq 1$. Hence for any $X\in U$ we have $\Pf\mu(X)\geq 1$. Therefore, 
\[
\Cf(U) \leq \mu[\lambda] = \frac{\mu_0[\lambda]+\mu_1[\lambda]}{1+f(0)\left(\mu_0[\lambda]+\mu_1[\lambda]\right)}.
\]
Taking $\mu_0[\lambda]\to\Cf[1](U_0)$ and $\mu_1[\lambda]\to\Cf[1](U_1)$ proves that
\[
\Cf(U)\leq \frac{\Cf[1](U_0)+\Cf[1](U_1)}{1+f(0)\left(\Cf[1](U_0)+\Cf[1](U_1)\right)}. \qedhere
\]
\end{proof}

\begin{rem}\label{rem:Cs}
Fix $s\in[0,1)$ and let $f(n)=2^{sn}$. Define the \emph{$s$-capacity} of a set $U\subseteq 2^\omega$ to be $\Cs(U)=\Cf(U)$. Note that $\Pf[1]\mu = 2^s\Pf\mu$ for any measure $\mu$. Hence, we have $\Cf[1] = \Cf/2^s = \Cs/2^s$. Combining this observation with Lemma~\ref{lem:cap-relation} gives us a nice recursive expression for $\Cs$. For any $U\subseteq 2^\omega$,
\begin{align*}
\Cs(U) &= \frac{\Cf[1](U_0)+\Cf[1](U_1)}{1+f(0)\left(\Cf[1](U_0)+\Cf[1](U_1)\right)} = \frac{\Cs(U_0)/2^s+\Cs(U_1)/2^s}{1+1\cdot\left(\Cs(U_0)/2^s+\Cs(U_1)/2^s\right)}\\
	&= \frac{\Cs(U_0)+\Cs(U_1)}{2^s+\Cs(U_0)+\Cs(U_1)}.
\end{align*}
\end{rem}

Note that the $f$-capacity of every clopen set is determined by Lemma~\ref{lem:cap-relation} together with parts~\eqref{it:empty} and~\eqref{it:full} of Lemma~\ref{lem:cap-basic}.\footnote{In fact, if $|f|$ is computable (which implies that $f$ is also computable), then we could compute the $f$-capacity of a clopen set. We defer effectiveness questions until later.} Moreover, if $U$ is a clopen set and $k\in\omega$, then there is a measure $\mu$ realizing $\Cf[k](U)$. To see this, note that $\Cf[k](\emptyset)$ is realized by the empty measure and $\Cf[k](2^\omega)$ is realized by the measure $\mu_{1/|f_k|}$ from Lemma~\ref{lem:cap-basic}. Furthermore, if $\Cf[k+1](U_0)$ and $\Cf[k+1](U_1)$ are realized by $\mu_0$ and $\mu_1$, respectively, then $\Cf[k](U)$ is realized by the measure $\mu$ defined by
\[
\mu[i\sigma] = \frac{\mu_i[\sigma]}{1+f(k)\left(\mu_0[\lambda]+\mu_1[\lambda]\right)} = \frac{\mu_i[\sigma]}{1+f(k)\left(\Cf[k+1](U_0)+\Cf[k+1](U_1)\right)}.
\]
This follows by the same argument as the second part of the proof of Lemma~\ref{lem:cap-relation}. So we can produce a measure realizing $\Cf[k](U)$ for any clopen set $U$. A straightforward calculation shows that if $\mu_0 = \mu_1 = \mu_{1/|f_{k+1}|}$, then $\mu$ is $\mu_{1/|f_k|}$. This means that we get the same measure $\mu$ no matter how we decompose the clopen set $U$. (In fact, it turns out that at most one measure can realize $\Cf[k](U)$, but we do not need this.)

\begin{lem}\label{lem:cap-clopen}
Let $U\subseteq 2^\omega$ be a clopen set and $k\in\omega$. Let $\mu$ be the measure realizing $\Cf[k](U)$ described above.
\begin{enumerate}
	\item\label{it:total-measure} $\mu(U) = \Cf[k](U)$.
	\item\label{it:equal-one} $(\forall X\in U)\; \Pf[k]\mu(X) = 1$.
	\item\label{it:leq-one} $(\forall X)\; \Pf[k]\mu(X)\leq 1$.
\end{enumerate}
\end{lem}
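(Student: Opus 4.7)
The plan is to prove all three items simultaneously by induction on the smallest $n$ such that $U$ is a union of cylinders $[\sigma]$ of length $n$; the step $U \mapsto (U_0, U_1)$ strictly reduces this rank. For $n = 0$, the only clopen sets are $\emptyset$ (handled by the zero measure) and $2^\omega$ (handled by $\mu_{1/|f_k|}$, for which the computation in the proof of Lemma~\ref{lem:cap-basic}(\ref{it:full}) already gives $\Pf[k]\mu_{1/|f_k|}(X) = 1$ for every $X$), so (1)--(3) hold trivially in the base case.

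For the inductive step, let $\mu_i$ be the measure realizing $\Cf[k+1](U_i)$ produced by the recursive recipe, which by the inductive hypothesis satisfies (1)--(3) at level $k+1$. Form $\mu$ by the recipe $\mu[i\sigma] = \mu_i[\sigma]/D$, where $D = 1 + f(k)\bigl(\mu_0[\lambda] + \mu_1[\lambda]\bigr)$; in particular $\mu[\lambda] = \bigl(\mu_0[\lambda] + \mu_1[\lambda]\bigr)/D$. The inductive hypothesis for (1) gives $\mu_i[\lambda] = \mu_i(U_i) = \Cf[k+1](U_i)$, so invoking Lemma~\ref{lem:cap-relation} shows $\mu[\lambda] = \Cf[k](U)$ and confirms the construction realizes $\Cf[k](U)$.

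To finish (1), decompose $\mu(U) = \mu(U \cap [0]) + \mu(U \cap [1])$. Since $\mu$ on $[i]$ is the pushforward of $\mu_i/D$ under $\sigma \mapsto i\sigma$, we have $\mu(U \cap [i]) = \mu_i(U_i)/D$, and the inductive hypothesis rewrites this as $\Cf[k+1](U_i)/D$; summing over $i$ and applying Lemma~\ref{lem:cap-relation} gives $\mu(U) = \Cf[k](U)$. For (2) and (3), I would repeat the key calculation from the proof of Lemma~\ref{lem:cap-relation}: for any $X' \in 2^\omega$ and $i \in \{0,1\}$,
\[
\Pf[k]\mu(iX') = f(k)\mu[\lambda] + \frac{\Pf[k+1]\mu_i(X')}{D} = \frac{D - 1}{D} + \frac{\Pf[k+1]\mu_i(X')}{D}.
\]
If $X' \in U_i$, the inductive hypothesis (2) gives $\Pf[k+1]\mu_i(X') = 1$, so the expression equals $1$; this proves (2). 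For any $X'$, the inductive hypothesis (3) gives $\Pf[k+1]\mu_i(X') \leq 1$, so the expression is at most $1$; this proves (3).

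The only real subtlety is running the three claims as a single simultaneous induction, so that the $\mu_i$ delivered by the recursion come already equipped with properties (1)--(3) at level $k+1$. Apart from this bookkeeping, the arithmetic is nothing more than the upper-bound half of the calculation in Lemma~\ref{lem:cap-relation}, so I anticipate no further obstacle.
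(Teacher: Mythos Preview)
Your proof is correct. Parts~(1) and~(2) are handled exactly as in the paper: an induction on the recursive construction, with the arithmetic for~(2) being precisely the upper-bound computation from Lemma~\ref{lem:cap-relation}. The only genuine difference is in part~(3). You fold~(3) into the same simultaneous induction, observing that the identity $\Pf[k]\mu(iX') = (D-1)/D + \Pf[k+1]\mu_i(X')/D$ immediately gives $\Pf[k]\mu(X)\leq 1$ once the inductive hypothesis $\Pf[k+1]\mu_i(X')\leq 1$ is available. The paper instead derives~(3) non-inductively from~(1) and~(2): it uses~(1) to conclude that $\mu$ is supported on $U$, so for $X\notin U$ one can truncate the potential at the first level where $[X\uh m]$ misses $U$ and then compare with a neighbor $Z\in U$ sharing the first $m-1$ bits, invoking~(2) for $Z$. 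Your route is more uniform and arguably cleaner; the paper's route has the minor advantage of isolating the structural fact that $\mu$ lives on $U$, which is of some independent interest.
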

\begin{proof}
\eqref{it:total-measure} follows by induction on the definition of $\mu$. It is clearly true for $U=\emptyset$ and $U=2^\omega$. For any other case, assume that $\mu_0(U_0) = \Cf[k+1](U_0)$ and $\mu_1(U_1) = \Cf[k+1](U_1)$. Then
\begin{align*}
\mu(U) &= \mu(0U_0) + \mu(1U_1) \\
	&= \frac{\mu_0(U_0)}{1+f(k)\left(\Cf[k+1](U_0)+\Cf[k+1](U_1)\right)} + \frac{\mu_1(U_1)}{1+f(k)\left(\Cf[k+1](U_0)+\Cf[k+1](U_1)\right)} \\
	&= \frac{\Cf[k+1](U_0)+\Cf[k+1](U_1)}{1+f(k)\left(\Cf[k+1](U_0)+\Cf[k+1](U_1)\right)} = \Cf[k](U).
\end{align*}

We also prove $\eqref{it:equal-one}$ by induction on the definition of $\mu$. Again, the claim is clear for $U=\emptyset$ and $U=2^\omega$. For any other case, consider $X\in U$. Without loss of generality, assume that the first bit of $X$ is $0$. Say that $X = 0Y$, so $Y\in U_0$. Let $\mu_0$ be the measure that we construct to realize $\Cf[k+1](U_0)$. By induction, $\Pf[k+1]\mu_0(Y) = 1$. Therefore,
\begin{align*}
\Pf[k]\mu(X) &= f(k)\mu[\lambda] + \Pf[k+1]\left(\frac{\mu_0}{1+f(k)\left(\Cf[k+1](U_0)+\Cf[k+1](U_1)\right)}\right)(Y) \\
	&= f(k)\Cf[k](U) + \left(\frac{1}{1+f(k)\left(\Cf[k+1](U_0)+\Cf[k+1](U_1)\right)}\right)\Pf[k+1]\mu_0(Y) \\
	&= f(k)\left(\frac{\Cf[k+1](U_0)+\Cf[k+1](U_1)}{1+f(k)\left(\Cf[k+1](U_0)+\Cf[k+1](U_1)\right)}\right) \\
	&\hspace{1in} + \frac{1}{1+f(k)\left(\Cf[k+1](U_0)+\Cf[k+1](U_1)\right)} = 1.
\end{align*}

To prove \eqref{it:leq-one}, consider an $X\in 2^\omega$. If $X\in U$, then we have already shown that $\Pf[k]\mu(X) = 1$. If $U = \emptyset$, then $\mu$ is the zero measure and so $\Pf[k]\mu(X) = 0 \leq 1$. Otherwise, let $m>0$ be least such that $[X\uh m]\cap U = \emptyset$. Note that $\mu[X\uh m] = 0$.  (This follows from \eqref{it:total-measure}, $\mu(U)=\Cf(U)=\mu(2^\omega)$.) Pick $Z$ such that $Z\uh m-1 = X\uh m-1$ and $Z\in U$. Then using part~\eqref{it:equal-one},
\begin{align*}
\Pf[k]\mu(X) &= \sum_{n\in\omega} f_k(n)\mu[X\uh n] = \sum_{n=0}^{m-1} f_k(n)\mu[X\uh n] \\
	&= \sum_{n=0}^{m-1} f_k(n)\mu[Z\uh n] \leq \Pf[k]\mu(Z) = 1. \qedhere
\end{align*}
\end{proof}

We now give two useful expressions for $\Cf(U)$ in the case that $U$ is open.

\begin{lem}\label{lem:cap-open}
Let $U\subseteq 2^\omega$ be an open set.
\begin{enumerate}
	\item\label{it:approx} $\Cf(U) = \sup\;\{\Cf(V)\colon V\subseteq U\text{ is clopen}\}$.
	\item\label{it:eq-sup} $\Cf(U) = \sup\;\{\nu(U)\colon \nu\in\+M\text{ and }(\forall Y)\; \Pf\nu(Y)\leq 1\}$.
\end{enumerate}
\end{lem}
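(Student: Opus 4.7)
The plan is to prove both parts simultaneously by reducing them to the single identity $\Cf(U) = \sup_n \Cf(V_n)$ for an increasing sequence $V_n \uparrow U$ of clopen subsets. Writing $U$ as the union of its basic cylinders $[\tau_i]$ and setting $V_n = \bigsqcup_{i<n}[\tau_i]$, compactness ensures that any clopen $V \subseteq U$ is contained in some $V_N$, so $\sup\{\Cf(V) : V \subseteq U \text{ clopen}\} = \sup_n \Cf(V_n) =: c$. The easy direction $\Cf(U) \geq c$ comes from monotonicity (Lemma~\ref{lem:cap-basic}\eqref{it:monotone}), and $\Cf(U) \geq \sup\{\nu(U) : \Pf\nu \leq 1\}$ is Lemma~\ref{lem:cap-basic}\eqref{it:geq-sup}.

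Next I would show the two suprema in \eqref{it:approx} and \eqref{it:eq-sup} both equal $c$. Using the equilibrium measures $\mu_n$ from Lemma~\ref{lem:cap-clopen} (each satisfying $\Pf\mu_n \leq 1$ everywhere and $\mu_n(V_n) = \Cf(V_n)$), each $\mu_n$ is an admissible test measure in \eqref{it:eq-sup} with $\mu_n(U) \geq \Cf(V_n)$, giving $\sup\{\nu(U) : \Pf\nu \leq 1\} \geq c$. Conversely, for any $\nu$ with $\Pf\nu \leq 1$, monotone convergence of measures together with Lemma~\ref{lem:cap-basic}\eqref{it:geq-sup} applied to each clopen $V_n$ yields $\nu(U) = \lim_n \nu(V_n) \leq \lim_n \Cf(V_n) = c$. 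Consequently both parts reduce to the single inequality $\Cf(U) \leq c$.

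The crux is showing $\Cf(U) \leq c$. Since $\mu_n[\sigma] \leq c$ for every $\sigma$, I would diagonalize over the countable set $2^{<\omega}$ to pass to a subsequence along which $\mu_n[\sigma]$ converges for every $\sigma$ to some $\mu[\sigma]$; the limit satisfies the martingale identity and extends to a measure $\mu$ with $\mu(2^\omega) \leq c$. By Fatou's lemma applied to the series $\sum_m f(m)\mu_n[X\uh m]$, one gets $\Pf\mu \leq 1$ everywhere. The main obstacle is the matching lower bound $\Pf\mu(X) \geq 1$ for $X \in U$, which Fatou does not supply. To obtain it, I would verify that the pointwise limit $\mu$ coincides with the measure $\mu_U$ obtained by applying the recursive construction underlying Lemma~\ref{lem:cap-relation} iteratively to $U$: for each $X \in U$, openness furnishes some $n_X$ with $[X\uh n_X] \subseteq U$, so $U_{X\uh n_X} = 2^\omega$; the recursion then bottoms out at the uniform equilibrium for $2^\omega$ from Lemma~\ref{lem:cap-basic}\eqref{it:full}, whose $f_{n_X}$-potential is identically $1$. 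The identity $\Pf\mu(iY) - 1 = (\Pf[1]\mu_{U_i}(Y) - 1)/D$ derived in the construction accompanying Lemma~\ref{lem:cap-relation} then propagates this equality up through the $n_X$ levels to yield $\Pf\mu(X) = 1$. The technical heart of this step, and its main obstacle, is establishing the pointwise convergence $\mu_n[\sigma] \to \mu_U[\sigma]$ along the subsequence, which reduces to verifying $\Cf[k]((V_n)_\tau) \to \Cf[k](U_\tau)$ for every $k$ and $\tau$—a shifted instance of the same identity, to be handled inductively alongside the lemma itself.
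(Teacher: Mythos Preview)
Your reductions for part~\eqref{it:eq-sup} are fine and essentially match the paper's. The gap is in the crux step $\Cf(U) \leq c$. Your plan is to identify the subsequential limit $\mu$ with a recursively defined $\mu_U$, and then verify $\Pf\mu_U(X)=1$ by unwinding the recursion of Lemma~\ref{lem:cap-relation} through $n_X$ levels. But that unwinding requires knowing $\Cf[k]((V_n)_\tau) \to \Cf[k](U_\tau)$ for each $\tau \preceq X\uh n_X$, and each such convergence is precisely an instance of part~\eqref{it:approx} for the open set $U_\tau$ with the shifted function $f_k$. Calling this ``a shifted instance of the same identity, to be handled inductively alongside the lemma itself'' does not supply a well-founded induction: no parameter decreases when passing from $U$ to $U_\tau$, since $U_\tau$ is again an arbitrary open set. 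As written, the argument is circular.

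The paper avoids this with a local observation you do not exploit. The equilibrium measure $\mu_n$ realizing $\Cf(V_n)$ is \emph{uniform} on any cylinder $[\sigma] \subseteq V_n$: for $\tau \succeq \sigma$ one has $\mu_n[\tau] = 2^{|\sigma|-|\tau|}\mu_n[\sigma]$, because the recursive construction bottoms out at the uniform measure $\mu_{1/|f_k|}$ once $(V_n)_\sigma = 2^\omega$. This uniformity passes to the subsequential limit $\mu$. Consequently, for $X \in [\sigma] \subseteq U$,
\[
\Pf\mu(X) = \sum_{m<|\sigma|} f(m)\mu[X\uh m] + \mu[\sigma]\sum_{m\geq|\sigma|} f(m)2^{|\sigma|-m}
\]
depends on only finitely many values of $\mu$, so the limit commutes with the potential sum and $\Pf\mu(X) = \lim_k \Pf\mu_{i_k}(X) = 1$ directly. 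No global identification of $\mu$ with $\mu_U$, and no appeal to shifted instances of the lemma, is needed.
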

\begin{proof}
To prove~\eqref{it:approx}, first note that Lemma~\ref{lem:cap-basic}\eqref{it:monotone} implies that
\[
\Cf(U) \geq \sup\;\{\Cf(V)\colon V\subseteq U\text{ is clopen}\}.
\]
Now let $U_0\subseteq U_1\subseteq \cdots$ be a sequence of clopen sets such that $U = \bigcup_{i\in\omega} U_i$. Let $\mu_0, \mu_1, \dots$ be the corresponding measures, as defined above. There is a subsequence $\mu_{i_0}, \mu_{i_1}, \dots$ that converges on all clopen sets. (In fact, it can be shown that the full sequence converges, but this is not needed.) Let $\mu$ be the limit.

Consider $X\in U$. Fix $\sigma\prec X$ such that $[\sigma]\subseteq U$. Take $K\in\omega$ large enough that $[\sigma]\subseteq U_{i_K}$. For any $k\geq K$ and any $\tau\succeq\sigma$, we have $\mu_{i_k}[\tau] = 2^{|\sigma|-|\tau|}\mu_{i_k}[\sigma]$. This is preserved in the limit, so it is true of $\mu$. Therefore,
\begin{align*}	
\Pf\mu(X) &= \sum_{n\in\omega} f(n)\mu[X\uh n] = \sum_{n<|\sigma|} f(n)\mu[X\uh n] + \sum_{n\geq|\sigma|} f(n)\mu[X\uh n] \\
	&= \sum_{n<|\sigma|} f(n)\mu[X\uh n] + \sum_{n\geq|\sigma|} f(n)2^{|\sigma|-n}\mu[\sigma] \\
	&= \lim_{k\to\infty} \left(\sum_{n<|\sigma|} f(n)\mu_{i_k}[X\uh n] + \sum_{n\geq|\sigma|} f(n)2^{|\sigma|-n}\mu_{i_k}[\sigma]\right) \\
	&= \lim_{k\to\infty} \Pf\mu_{i_k}(X) = \lim_{k\to\infty} 1 = 1.
\end{align*}
(The key to this calculation is that the limit commutes with $\Pf$ in the expression $\lim_{k\to\infty}\Pf\mu_{i_k}(X)$ because, in this case, $\Pf\mu_{i_k}(X)$ only depends on a fixed finite number of values of $\mu_{i_k}$.) We have shown that $(\forall X\in U)\; \Pf\mu(X)\geq 1$. This implies that $\Cf(U) \leq \mu[\lambda] = \lim_{k\to\infty} \mu_{i_k}[\lambda] = \lim_{k\to\infty} \Cf(U_{i_k})$. Therefore, 
\[
\Cf(U) \leq \sup\;\{\Cf(V)\colon V\subseteq U\text{ is clopen}\}.
\]

For~\eqref{it:eq-sup}, let $r = \sup\;\{\nu(U)\colon \nu\in\+M\text{ and }(\forall Y)\; \Pf\nu(Y)\leq 1\}$. In Lemma~\ref{lem:cap-basic}\eqref{it:geq-sup}, we proved that $\Cf(U) \geq r$. Now take any clopen $V\subseteq U$. Let $\nu$ be the measure that we constructed to realize $\Cf(V)$. By Lemma~\ref{lem:cap-clopen}, $(\forall Y)\; \Pf\nu(Y)\leq 1$ and, of course, $\nu(V) = \Cf(V)$. Therefore, $\Cf(V)\leq r$. By~\eqref{it:approx}, we now have $\Cf(U) = \sup\;\{\Cf(V)\colon V\subseteq U\text{ is clopen}\} \leq r$.
\end{proof}

%%%%%%%%
\subsection*{\texorpdfstring{\boldmath$\Cf$}{C\textunderscore f}-randomness}
%%%%%%%%

We can view the $f$-capacity of a set as a measure of its size. Using this size in place of measure gives us a notion of randomness analogous to Martin-L\"of randomness.

\begin{defn}
A computable sequence $\{U_n\}_{n\in\omega}$ of $\Sigma^0_1$-classes is a \emph{$\Cf$-test} if $(\forall n)\; \Cf(U_n)\leq 2^{-n}$. A sequence $X\in 2^\omega$ \emph{passes} the $\Cf$-test if $X\notin\bigcup_{n\in\omega} U_n$. We say that $X\in 2^\omega$ is \emph{$\Cf$-random} if it passes every $\Cf$-test.
\end{defn}

The second author characterized $f$-energy randomness as $\Cf$-randomness, at least under the assumption that $\Cf$ is \emph{computable}, i.e., uniformly computable on clopen sets \cite{Rute:aa}. We will prove it under the slightly weaker assumption that $f$ is computable. Note that even if $f$ is computable, that does not mean that $\Cf$ is computable: $\Cf(2^\omega) = 1/|f|$ is guaranteed only to be a right-c.e.\ real number. It should be noted, however, that the weaker assumption adds no difficulty to the proof, which we include for the sake of completeness.

\begin{thm}[Rute~\cite{Rute:aa}]\label{thm:rute}
Assume that $f\colon\omega\to[0,\infty)$ is computable. Then $X\in 2^\omega$ is $f$-energy random if and only if it is $\Cf$-random.
\end{thm}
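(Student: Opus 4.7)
For the forward direction, suppose $X$ is $\mu$-random for some $\mu$ with $\Ef(\mu)<\infty$, and let $\{U_n\}_{n\in\omega}$ be a $\Cf$-test. The plan is to build a $\mu$-test that catches $X$ whenever $X$ lies in infinitely many $U_n$. Combining Lemmas~\ref{lem:cap-clopen} and~\ref{lem:cap-open}, each $U_n$ admits a realizer measure $\nu_n$ (uniformly left-c.e.\ in $n$, since $f$ is computable and $\{U_n\}$ is uniformly $\Sigma^0_1$) with $\nu_n(2^\omega)=\Cf(U_n)\leq 2^{-n}$, $\Pf\nu_n\leq 1$ everywhere, and $\Pf\nu_n(Y)=1$ for all $Y\in U_n$; in particular $\Ef(\nu_n)=\int\Pf\nu_n\,d\nu_n\leq\nu_n(2^\omega)\leq 2^{-n}$. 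By Proposition~\ref{prop:energy-characerization}, the bilinear form $(\alpha,\beta)_f:=\sum_\sigma f(|\sigma|)\alpha[\sigma]\beta[\sigma]$ is positive semidefinite with $(\alpha,\alpha)_f=\Ef(\alpha)$, so $\|\cdot\|_f:=\sqrt{\Ef(\cdot)}$ satisfies Minkowski's inequality. Hence $\nu:=\sum_n\nu_n$ has $\|\nu\|_f\leq\sum_n 2^{-n/2}<\infty$, giving $\Ef(\nu)<\infty$. Cauchy--Schwarz combined with Lemma~\ref{lem:two-measures} then yields $\int\Pf\nu\,d\mu=(\mu,\nu)_f\leq\|\mu\|_f\|\nu\|_f<\infty$, so Markov's inequality makes $V_k:=\{Y:\Pf\nu(Y)>2^k\}$ a uniformly $\Sigma^0_1$ $\mu$-test. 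Any $X$ lying in infinitely many $U_n$ satisfies $\Pf\nu(X)=\infty$, contradicting $\mu$-randomness.

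For the backward direction, assume $X$ is $\Cf$-random and fix a universal $\Cf$-test $\{U_n^*\}_{n\in\omega}$. By $\Cf$-randomness, pick $n_0$ with $X\notin U_{n_0}^*$, so $X$ lies in the $\Pi^0_1$ class $K:=2^\omega\setminus U_{n_0}^*$. Write $U_{n_0}^*$ as an increasing union of clopen sets $V_i$, so $W_i:=2^\omega\setminus V_i$ is a decreasing sequence of clopen supersets of $K$. Let $\mu_i$ be the realizer of $\Cf(W_i)$ from Lemma~\ref{lem:cap-clopen}, so $\Pf\mu_i\leq 1$ and $\mu_i$ is supported on $W_i$ with $\mu_i(2^\omega)=\Cf(W_i)\leq 1/|f|$. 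Since the $\mu_i$ are uniformly bounded, a weak-$*$ subsequential limit $\mu$ exists, is supported on $K$, satisfies $\Pf\mu\leq 1$, and has $\mu(K)=\lim_i\Cf(W_i)=\Cf(K)$ by a Choquet-style capacitability argument. Taking $n_0$ large enough that $\Cf(K)>0$ (possible since $\Cf(U_{n_0}^*)\to 0$ while $\Cf(2^\omega)=1/|f|>0$), we obtain $\Ef(\mu)\leq\mu(2^\omega)=\Cf(K)<\infty$.

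It remains to show that $X$ is $\mu$-random. Since $\mu$ is computable from the single natural number $n_0$, we have $\Sigma^0_1[\mu]=\Sigma^0_1$ and $\KM^\mu=\KM+O(1)$, so by the characterization of $\mu$-randomness via a priori complexity from Section~2 it suffices to prove that no uniformly $\Sigma^0_1$ sequence $\{W_m\}$ with $\mu(W_m)\leq 2^{-m}$ contains $X$ in its intersection. The plan is to convert such a hypothetical $\{W_m\}$ into a genuine $\Cf$-test also capturing $X$, contradicting $\Cf$-randomness. The main obstacle is that $\mu(W)\leq 2^{-m}$ does not immediately yield $\Cf(W)\leq O(2^{-m})$---indeed Lemma~\ref{lem:cap-open}\eqref{it:eq-sup} gives only the reverse inequality $\mu(W)\leq\Cf(W)$. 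To bridge this, one exploits the equilibrium property of $\mu$ (so $\Pf\mu=1$ at interior points of $K$) and replaces each $W_m$ by a clopen $\Cf$-efficient cover $W'_m\supseteq W_m\cap K$ of capacity $O(\mu(W_m))$, using a covering argument that invokes the realizer construction of Lemma~\ref{lem:cap-clopen} on finitely many cylinders contained in $W_m$. Making this conversion uniform and effective in $m$ is the crux of the proof.
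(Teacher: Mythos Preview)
Both directions of your argument take a different route from the paper, and both contain genuine gaps.

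\textbf{Forward direction.} Your Cauchy--Schwarz idea on the bilinear form $(\alpha,\beta)_f$ is elegant, but the claim that the realizers $\nu_n$ are ``uniformly left-c.e.\ in $n$'' is unjustified and, in fact, false in general. The realizer for a clopen set built in Lemma~\ref{lem:cap-clopen} is computed via a recursion whose base case uses $|f_k|$; this requires $|f|$ to be computable, which Theorem~\ref{thm:rute} does not assume. Even granting that, the realizer for an open set in Lemma~\ref{lem:cap-open} is a \emph{subsequential} weak-$*$ limit, and Section~\ref{sec:effectiveness} opens precisely by explaining (with an explicit example) that such realizers are not monotone in the underlying clopen approximation, hence not left-c.e. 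Without effectiveness of $\nu$, your sets $V_k=\{Y:\Pf\nu(Y)>2^k\}$ need not be $\Sigma^0_1[\mu]$, so $\{V_k\}$ is not a $\mu$-test. The paper sidesteps all of this: given $\mu$ with $\Ef(\mu)<\infty$, it truncates $\mu$ to the region where $\Pf\mu\leq 2^n$, obtaining $\mu_n$ with $\Pf\mu_n\leq 2^n$ everywhere, and then Lemma~\ref{lem:cap-open}\eqref{it:eq-sup} gives $\mu(U)\leq 2^n\Cf(U)+O(2^{-n})$ for every open $U$. Applied to $U_{2n}$ this yields $\mu(U_{2n})=O(2^{-n})$, so the \emph{original} classes $\{U_{2n}\}$ already form a $\mu$-test---no auxiliary measure is built at all.

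\textbf{Backward direction.} Your plan is to construct an equilibrium measure $\mu$ on a $\Pi^0_1$ class $K\ni X$ and then argue that $X$ is $\mu$-random. You correctly identify the obstacle (small $\mu$-measure does not imply small $\Cf$-capacity) and then leave it open, saying the effective conversion ``is the crux of the proof.'' That is precisely the missing idea; moreover, your claim that $\mu$ is computable from $n_0$ is again unjustified (same non-monotonicity issue as above). The paper's argument avoids constructing any particular $\mu$. Instead, it uses that $\mathcal{P}=\{\nu\in\+M:(\forall Y)\,\Pf\nu(Y)\leq 1\}$ is a $\Pi^0_1$-class (this needs only $f$ computable), takes a universal $\nu$-test $\{U_n^\nu\}$ uniformly in $\nu$, and sets $V_n=\bigcap_{\nu\in\mathcal{P}} U_n^\nu$. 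Compactness of $\mathcal{P}$ makes $V_n$ a $\Sigma^0_1$-class, and Lemma~\ref{lem:cap-open}\eqref{it:eq-sup} gives $\Cf(V_n)\leq\sup_{\nu\in\mathcal{P}}\nu(U_n^\nu)\leq 2^{-n}$. If $X$ is not $f$-energy random, then $X$ fails $\{U_n^\nu\}$ for every $\nu\in\mathcal{P}$ (since $\Ef(\nu)\leq 1$), hence $X\in\bigcap_n V_n$. This intersection-over-a-compact-class-of-oracles trick is the real content of the backward direction and is absent from your sketch.
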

\begin{proof}
First assume that $X$ is not $\Cf$-random. Then $X$ fails a $\Cf$-test $\{U_n\}_{n\in\omega}$. Let $\mu$ be any measure such that $\Ef\mu<\infty$. We must prove that $X$ is not random for $\mu$. Fix $c$ such that $\Ef\mu = \int\Pf\mu(X)\; d\mu(X) < c$. Fix $n$ and let $V_n = \{X\colon \Pf\mu(X)>2^n\}$. So $\mu(V_n) < c2^{-n}$. Define a new measure $\mu_n$ by $\mu_n(U)=\mu(U\smallsetminus V_n)$. Note that $\mu_n\leq\mu$, so $\Pf\mu_n\leq\Pf\mu$. Hence if $\Pf\mu(X)\leq 2^n$, then $\Pf\mu_n(X)\leq 2^n$. If $\Pf\mu(X)> 2^n$, then there is a least $\sigma\prec X$ such that $\sum_{\tau\preceq\sigma} f(|\tau|)\mu[\tau]>2^n$. Note that $[\sigma]\subseteq V_n$, so $\mu_n[\sigma]=0$. Therefore,
\[
\Pf\mu_n(X) = \sum_{m\in\omega} f(n)\mu_n[X\uh m] = \sum_{\tau\prec\sigma} f(|\tau|)\mu_n[\tau] \leq \sum_{\tau\prec\sigma} f(|\tau|)\mu[\tau] \leq 2^n.
\]
So $(\forall X)\; \Pf\mu_n(X)\leq 2^n$. By Lemma~\ref{lem:cap-open}\eqref{it:eq-sup}, for any open set $U$ and any $n\in\omega$:
\[
\mu(U) = \mu(U\smallsetminus V_n) + \mu(U\cap V_n) < \mu_n(U) + c2^{-n} \leq 2^n\Cf(U) + c2^{-n}.
\]
Thus $\mu(U_{2n}) < 2^n\Cf(U_{2n}) + c2^{-n} \leq (1+c)2^{-n}$, meaning that $\{U_{2n}\}_{n\in\omega}$ is (essentially) a $\mu$-test covering $X$. This proves that $X$ is not random for $\mu$, but $\mu$ was any measure such that $\Ef\mu<\infty$, so $X$ is not $f$-energy random.

For the other direction, assume that $X$ is not $f$-energy random. We can produce a universal $\nu$-test uniformly in a measure $\nu$. Let $\{U^\nu_n\}_{n\in\omega}$ be the resulting tests. Note that the computability of $f$ implies that $\{\nu\in\+M\colon (\forall Y)\; \Pf\nu(Y)\leq 1\}\}$ is a $\Pi^0_1$-class. For each $n$, let $V_n = \bigcap\; \{U^\nu_n\colon \nu\in\+M\text{ and }(\forall Y)\; \Pf\nu(Y)\leq 1\}$. Because $V_n$ is the intersection of a uniform family of $\Sigma^0_1[\nu]$-classes over a (compact) $\Pi^0_1$-class of measures, it is a $\Sigma^0_1$-class. This is uniform in $n$. By Lemma~\ref{lem:cap-open}\eqref{it:eq-sup}:
\begin{align*}
\Cf(V_n) &= \sup\;\{\nu(V_n)\colon \nu\in\+M\text{ and }(\forall Y)\; \Pf\nu(Y)\leq 1\} \\
	&\leq \sup\;\{\nu(U^\nu_n)\colon \nu\in\+M\text{ and }(\forall Y)\; \Pf\nu(Y)\leq 1\} \leq 2^{-n}.
\end{align*}
Therefore, $\{V_n\}_{n\in\omega}$ is a $\Cf$-test. Finally, if $(\forall Y)\; \Pf\nu(Y)\leq 1$, then $\Ef\nu\leq 1$. In this case, $X$ is not random for $\nu$, so $X\in U^\nu_n$ for all $n$. This implies that $X\in V_n$ for all $n$, so $X$ is not $\Cf$-random.
\end{proof}

%%%%%%%%
%%%%%%%%
\section{How effective is \texorpdfstring{$f$}{f}-capacity?}
\label{sec:effectiveness}
%%%%%%%%
%%%%%%%%

Let $S\subseteq 2^{<\omega}$ be a prefix-free c.e.\ set. In particular, let $S_0\subseteq S_1\subseteq S_2\cdots$ be a computable sequence of finite sets such that $S = \bigcup_{t\in\omega} S_t$. In the previous section, we noted that even if $f$ is computable, $\Cf$ need not be computable (on clopen sets). So assume that $|f|$ is computable in addition to $f$ being computable. This also implies that the sequence $\{|f_k|\}_{k\in\omega}$ is computable. From this, we can compute $\Cf[][S_t]$ by using Lemma~\ref{lem:cap-basic}\eqref{it:empty} and~\eqref{it:full} for the base cases and Lemma~\ref{lem:cap-relation} for the recursion. By Lemma~\ref{lem:cap-open}\eqref{it:approx} (and the monotonicity of $s$-capacity), $\Cf[][S] = \sup_{t\in\omega} \Cf[][S_t]$, hence $\Cf[][S]$ is a left-c.e.\ real. This is as effective as we could hope $\Cf[][S]$ to be for an arbitrary $\Sigma^0_1$-class $[S]$.

However, even assuming that $|f|$ is computable, there is a sense in which $f$-capacity is not as effective as we would like. In the proof of Lemma~\ref{lem:cap-open}, we constructed a measure $\mu$ that realizes $\Cf[][S]$. Ideally, for a c.e.\ set $S$ we would want $\mu$ to be left-c.e., but this need not be the case. An example will clarify the problem. Fix $s\in[0,1)$. Using Lemma~\ref{lem:cap-basic}\eqref{it:full}, it is not hard to see that the unique measure that realizes $\Cs(2^\omega)=1-2^{s-1}$ is the uniform measure $\mu$ with $\mu[\lambda] = 1-2^{s-1}$. Similarly, by Remark~\ref{rem:Cs}, the unique measure $\mu_0$ that realizes $\Cs[0]$ is the one that is uniform on $[0]$ and has
\[
\mu_0[\lambda] = \mu_0[0] = \frac{1-2^{s-1}}{1+2^{s-1}}.
\]
Note that $\mu[0] = (1-2^{s-1})/2 < (1-2^{s-1})/(1+2^{s-1}) = \mu_0[0]$. This simple example illustrates that even if $S_u\subseteq S_v$, a measure that realizes $\Cf[][S_u]$ might unavoidably give some sets higher measure than a measure that realizes $\Cf[][S_v]$. In other words, as we approximate $[S]$ we get a natural sequence of approximations to the measure that realizes $\Cf[][S]$, but there is no way to guarantee that these approximations converge from below.

This problem is surmountable: we will show that if $f$ is sufficiently well-behaved and $S\subseteq 2^{<\omega}$ is a c.e.\ set, then there is a left-c.e.\ measure that ``almost realizes'' $\Cf[][S]$.

%%%%%%%%
\subsection*{Our assumptions about \texorpdfstring{\boldmath$f$}{f}}
%%%%%%%%

As in the previous section, we assume that $|f|<\infty$ and $f$ has infinite support. These assumptions, which are important if we want to apply the work of Section~\ref{sec:capacity}, are used implicitly below; by Remarks~\ref{rem:too-fast} and~\ref{rem:fin-support}, they do not limit the scope of our main result. We also assume that $|f|$ is computable. We will need one more assumption.

\begin{defn}
We say that $f\colon\omega\to[0,\infty)$ is \emph{amicable} if $\displaystyle\sup_{k\in\omega} f(k)\Cf[k+1](2^\omega)<\infty$.
\end{defn}

Note that $\Cf[k+1](2^\omega) = 1/|f_{k+1}| < 1/f(k+1)$, so it is immediate that
\begin{itemize}
	\item If $\sup_{k\in\omega} f(k)/f(k+1)<\infty$, then $f$ is amicable.
	\item If $f$ is nondecreasing, then $f$ is amicable.
\end{itemize}

Assuming that $f$ is amicable, let $\|f\| = \displaystyle\sup_{k\in\omega} f(k)\Cf[k+1](2^\omega)$.

%%%%%%%%
\subsection*{Dynamic weight}
%%%%%%%%

As above, let $S\subseteq 2^{<\omega}$ be a prefix-free c.e.\ set effectively approximated by an increasing sequence of finite sets $\{S_t\}_{t\in\omega}$. We may assume that $S_0=\emptyset$ and $(\forall t)\; |S_{t+1}\smallsetminus S_t|\leq 1$. Call such an enumeration \emph{good}. Let $a = 2\|f\|+2$. Our goal is to produce a left-c.e.\ measure $\mu_S$ such that $\mu_S[\lambda]\leq a\Cf[][S]$ and $(\forall X\in [S])\; \Pf\mu_S(X)\geq 1$.

First, we define a ``dynamic weight'' for prefix-free c.e.\ sets (or rather, for good enumerations of such sets). For each $k\in\omega$, let $\ww^k_0(S) = 0$. Assume that we have already defined $\ww^k_{t-1}(S)$. If $S_t=S_{t-1}$, then let $\ww^k_t(S) = \ww^k_{t-1}(S)$. Otherwise, let $\sigma$ be the unique string in $S_t\smallsetminus S_{t-1}$. If $\sigma=\lambda$, then let $\ww^k_t(S) = 1/|f_k|$. For any other $\sigma$, assume by induction on the length of $\sigma$ that $\ww^{k+1}_t(S^0)$ and $\ww^{k+1}_t(S^1)$ are defined, where $S^i = \{\sigma\colon i\sigma\in S\}$ and $S^i$ is given the induced good enumeration $S^i_t = \{\sigma\colon i\sigma\in S_t\}$. Let
\[
\ww^k_t(S) = \ww^k_{t-1}(S) + \frac{(\ww^{k+1}_t(S^0)+\ww^{k+1}_t(S^1))-(\ww^{k+1}_{t-1}(S^0)+\ww^{k+1}_{t-1}(S^1))}{1+f(k)\left(\ww^{k+1}_t(S^0)+\ww^{k+1}_t(S^1)\right)}.
\]
A simple induction shows that $\ww^k_t(S)$ is nondecreasing as a function of $t$. Therefore, we can define $\ww^k(S) = \lim_{t\to\infty} \ww^k_t(S)$.  

It should be noted that the value of $\ww^k(S)$ depends not only on the prefix-free set $S$, but also on the choice of good enumeration $\{S_t\}_{t\in\omega}$.  Nonetheless, this next lemma shows that $\ww^k(S)$ is not much larger than $\Cf[k][S]$.

\begin{lem}\label{lem:bound}
Fix a (good enumeration of a) prefix-free c.e.\ set $S\subseteq 2^{<\omega}$. For each $k\in\omega$, we have $\ww^k(S)\leq (2\|f\|+2)\Cf[k][S]$.
\end{lem}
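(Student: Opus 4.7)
The plan is to reduce to the case of finite $S$ and then induct on depth. For the reduction, $\ww^k_t(S)$ depends only on the finite set $S_t$ with its induced good enumeration: if we give $S_t$ the enumeration obtained by restricting the one for $S$ (and stabilizing after stage $t$), then $\ww^k(S_t)$ equals $\ww^k_t(S)$. Combined with $\Cf[k][S_t] \le \Cf[k][S]$ by monotonicity, taking $t \to \infty$ propagates the bound from all finite sets to $S$.

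For finite $S$, I would induct on the maximum length $d$ of a string in $S$. The base cases $S = \emptyset$ and $S = \{\lambda\}$ are immediate: both sides equal $0$ or both equal $1/|f_k|$, respectively. For $d \ge 1$, necessarily $\lambda \notin S$, so $S = 0S^0 \cup 1S^1$ where each $S^i$ has depth $< d$ under its induced good enumeration. Setting
\[
A = \ww^{k+1}(S^0) + \ww^{k+1}(S^1), \qquad B = \Cf[k+1][S^0] + \Cf[k+1][S^1],
\]
the inductive hypothesis applied at level $k+1$ gives $A \le cB$, where $c := 2\|f\|+2$.

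Next I would bound the telescoping sum defining $\ww^k(S)$ by a Riemann integral. Each increment contributing to $\ww^k$ has the form $\Delta a_s/(1+f(k)a_s)$ with $a_s = \ww^{k+1}_s(S^0) + \ww^{k+1}_s(S^1)$; since $u \mapsto 1/(1+f(k)u)$ is decreasing, this increment is bounded above by $\int_{a_{s-1}}^{a_s} du/(1+f(k)u)$. Summing gives
\[
\ww^k(S) \le \int_0^A \frac{du}{1+f(k)u} = \frac{\ln(1+f(k)A)}{f(k)} \le \frac{\ln(1+f(k)cB)}{f(k)}
\]
(the degenerate case $f(k) = 0$ collapses directly to $\ww^k(S) \le A \le cB = c\,\Cf[k][S]$). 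On the other hand, Lemma~\ref{lem:cap-relation} gives $\Cf[k][S] = B/(1+f(k)B)$. Writing $y = f(k)B$, the desired bound $\ww^k(S) \le c\,\Cf[k][S]$ therefore reduces to the elementary inequality $\ln(1+cy) \le cy/(1+y)$.

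The main obstacle is this final calculus inequality, which pins down the exact constant $c = 2\|f\|+2$. Setting $h(y) = cy/(1+y) - \ln(1+cy)$, one computes $h'(y) = cy(c-2-y)/[(1+y)^2(1+cy)]$, which is nonnegative precisely on $[0, c-2]$; together with $h(0) = 0$, this yields the inequality throughout that interval. Finally, amicability gives $B \le 2\,\Cf[k+1](2^\omega)$, whence $y = f(k)B \le 2f(k)\Cf[k+1](2^\omega) \le 2\|f\| = c - 2$, which is exactly in the good range. Note that $c = 2\|f\|+2$ is the smallest constant forcing $c - 2 \ge 2\|f\|$, which explains its appearance in the statement.
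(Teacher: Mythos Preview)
Your proof is correct and follows essentially the same route as the paper: reduce to finite $S$ by monotonicity, induct on the depth of $S$, bound the telescoping sum for $\ww^k(S)$ by the integral $\int_0^A du/(1+f(k)u)$, and then invoke the elementary inequality $\ln(1+cy)\le cy/(1+y)$ on $[0,c-2]$ (which the paper isolates as a separate lemma) together with the amicability bound $f(k)B\le 2\|f\|=c-2$. The only cosmetic differences are notational.
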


We will need a simple inequality.

\begin{lem}\label{lem:calc}
For any $a\geq 0$, if $y\in[0,a-2]$, then $\displaystyle\ln(1+ay) \leq a\left(\frac{y}{1+y}\right)$.
\end{lem}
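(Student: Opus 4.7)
The plan is to reduce this to an elementary single-variable calculus exercise, since both sides agree at $y=0$ and what we need is a monotonicity comparison of derivatives on the specified interval. Note first that the statement is vacuous unless $a \geq 2$ (otherwise $[0,a-2]$ is empty), so we may assume $a \geq 2$.

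Define the auxiliary function
\[
g(y) = a\left(\frac{y}{1+y}\right) - \ln(1+ay)
\]
on $[0, a-2]$. I would compute $g(0) = 0$ and then
\[
g'(y) = \frac{a}{(1+y)^2} - \frac{a}{1+ay} = \frac{a\bigl((1+ay)-(1+y)^2\bigr)}{(1+y)^2(1+ay)} = \frac{a\cdot y\bigl((a-2)-y\bigr)}{(1+y)^2(1+ay)}.
\]
For $y \in [0, a-2]$, the factors $y$ and $(a-2)-y$ are both nonnegative, so $g'(y) \geq 0$ on the interval. Hence $g$ is nondecreasing on $[0, a-2]$, and from $g(0)=0$ we conclude $g(y) \geq 0$ throughout, which is exactly the desired inequality.

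There is no real obstacle here; the only mildly delicate point is recognizing that $(1+ay) - (1+y)^2 = y(a-2-y)$, which is what makes the sign condition on $y$ line up exactly with the given interval $[0,a-2]$. In the subsequent application (presumably bounding the single-step increment of $\ln(1 + a\ww^{k+1}(\cdots))$ by a multiple of the increment of $\ww^k$ in the recursion for $\ww^k_t(S)$), the constant $a = 2\|f\|+2$ and the uniform bound $\ww^{k+1}(S^0)+\ww^{k+1}(S^1) \leq 2\|f\|/f(k) \leq a-2$ coming from amicability should make the hypothesis $y \leq a-2$ automatic, so this lemma will slot in cleanly.
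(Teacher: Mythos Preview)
Your proof is correct and essentially identical to the paper's: define the difference function, compute its derivative as $\dfrac{ay(a-2-y)}{(1+y)^2(1+ay)}$, observe this is nonnegative on $[0,a-2]$, and conclude from the value at $y=0$. The only additions are your remark on the vacuous case $a<2$ and the explicit factorization of the numerator, both of which are fine.
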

\begin{proof}
Let $f(y) = ay/(1+y) - \ln(1+ay)$. We must show that $f(y)\geq 0$ on $[0,a-2]$. If $y\in[0,a-2]$, then
\[
f'(y) = \frac{a}{(1+y)^2}-\frac{a}{1+ay} = \frac{ay(a-2-y)}{(1+y)^2(1+ay)} \geq 0.
\]
Hence $f(y)$ is increasing on $[0,a-2]$. Since $f(0) = 0$, the lemma holds.
\end{proof}

\begin{proof}[Proof of Lemma~\ref{lem:bound}]
In light of Lemma~\ref{lem:cap-open}\eqref{it:approx} (or even Lemma~\ref{lem:cap-basic}\eqref{it:monotone}), it is enough to prove the assertion assuming that $S$ is finite. We do this by induction on the length of the longest string in $S$.

Let $a=2\|f\|+2$. If $S=\emptyset$, then $\ww^k(S)=0=a\Cf[k][S]$. If $S=\{\lambda\}$, then $\ww^k(S) = 1/|f_k| < a/|f_k| = a\Cf[k][S]$. In any other case, assume by induction that $\ww^{k+1}(S^0)\leq a\Cf[k+1][S^0]$ and $\ww^{k+1}(S^1)\leq a\Cf[k+1][S^1]$ (where the dynamic weights of $S^0$ and $S^1$ are defined using the enumerations induced by the enumeration of $S$). Note that if $f(k)=0$, then $\ww^k(S)=\ww^{k+1}(S^0)+\ww^{k+1}(S^1)\leq a\Cf[k+1][S^0]+a\Cf[k+1][S^1]=a\Cf[k][S]$, where the last equality follows from Lemma~\ref{lem:cap-relation}, so we are done. Therefore, we may assume that $f(k)>0$.

Let $t_0 = 0$, and let $t_1 < t_2 < \cdots < t_m$ be the sequence of stages $t$ at which $\ww^{k+1}_t(S^0) + \ww^{k+1}_t(S^1)$ increases. For each $i\in\{0,\dots,m\}$, let $v_i = \ww^{k+1}_{t_i}(S^0) + \ww^{k+1}_{t_i}(S^1)$. In particular, $v_0=0$ and $v_m = \ww^{k+1}(S^0) + \ww^{k+1}(S^1)$. Note that if $0<i\leq m$, then at stage $t_i$ we add $(v_i-v_{i-1})/(1+f(k)v_i)$ to $\ww^k(S)$. In other words, 
\[
\ww^k(S) = \sum_{i=1}^m \frac{v_i-v_{i-1}}{1+f(k)v_i}.
\]
But note that
\[
\frac{v_i-v_{i-1}}{1+f(k)v_i} = \int_{v_{i-1}}^{v_i} \frac{dx}{1+f(k)v_i} \leq \int_{v_{i-1}}^{v_i} \frac{dx}{1+f(k)x}.
\]
Let $z = \Cf[k+1][S^0] + \Cf[k+1][S^1]$. So by induction, $v_m\leq az$. We have
\begin{align*}
\ww^k(S) &= \sum_{i=1}^m \frac{v_i-v_{i-1}}{1+f(k)v_i} \leq \sum_{i=1}^m \int_{v_{i-1}}^{v_i} \frac{dx}{1+f(k)x} = \int_0^{v_m} \frac{dx}{1+f(k)x} \\
	&= \frac{1}{f(k)}\ln(1+f(k)v_m) \leq \frac{1}{f(k)}\ln(1+af(k)z).
\end{align*}
On the other hand, by Lemma~\ref{lem:cap-relation},
\[
\Cf[k][S] = \frac{\Cf[k+1][S^0] + \Cf[k+1][S^1]}{1+f(k)\left(\Cf[k+1][S^0] + \Cf[k+1][S^1]\right)} = \frac{z}{1+f(k)z}.
\]
We have $z\leq 2\Cf[k+1](2^\omega)$, so $f(k)z\leq 2\|f\| = a-2$. Therefore, we can apply Lemma~\ref{lem:calc} with $y=f(k)z$ to obtain
\[
f(k)\ww^k(S) \leq \ln(1+af(k)z) \leq a\left(\frac{f(k)z}{1+f(k)z}\right) = af(k)\Cf[k][S].
\]
Dividing by $f(k)$,
\[
\ww^k(S) \leq (2\|f\|+2)\Cf[k][S]. \qedhere
\]
\end{proof}

Now we are ready to define the measures $\mu^k_S$ associated with (the good enumeration of) $S$. They are defined in stages. For each $k\in\omega$, let $\mu^k_{S,0}$ be the zero measure. If $v = \ww^k_{t+1}(S)-\ww^k_t(S)$ in not zero, then the increase can be attributed to the unique string $\sigma\in S_{t+1}\smallsetminus S_t$. Let $\nu$ be the measure that is uniform on $[\sigma]$ and has $\nu[\lambda] = \nu[\sigma] = v$. Let $\mu^k_{S,t+1} = \mu^k_{S,t} + \nu$. Note that $\{\mu^k_t\}_{t\in\omega}$ is a nondecreasing sequence of computable measures, hence we can define a left-c.e.\ measure $\mu^k_S = \lim_{t\to\infty} \mu^k_{S,t}$. It is clear that $\mu^k_S[\lambda] = \ww^k(S)$. Note also that the construction is uniform in the enumeration of $S$. All that is left is to prove that the $f$-potential of $\mu^k_S$ is at least $1$ for all $X\in [S]$.

\begin{lem}
$(\forall k)(\forall X\in [S])\; \Pf[k]\mu^k_S(X)\geq 1$.
\end{lem}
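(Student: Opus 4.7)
The plan is to prove the claim first for finite prefix-free sets $S$ by induction on the length of the longest string in $S$, and then deduce the c.e.\ case by monotone convergence. Since $\{\mu^k_{S,t}\}_{t\in\omega}$ is pointwise nondecreasing, $\Pf[k]\mu^k_S(X)=\lim_{t\to\infty}\Pf[k]\mu^k_{S,t}(X)$; and if $X\in[S]$ then $X\in[S_t]$ for all sufficiently large $t$, so it suffices to prove the claim in the finite case, treating $\mu^k_{S,t}$ as the measure associated to the finite prefix-free set $S_t$ with its inherited good enumeration. The base cases are immediate: $S=\emptyset$ is vacuous, and if $S=\{\lambda\}$ then $\mu^k_S$ is uniform on $2^\omega$ with total mass $1/|f_k|$, giving $\Pf[k]\mu^k_S(X)=1$ for every $X$.

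For the inductive step, assume $\lambda\notin S$ and write $S = \{0\rho : \rho\in S^0\}\cup\{1\rho : \rho\in S^1\}$, where each $S^i$ is finite prefix-free, of strictly shorter longest string, and inherits a good enumeration from $S$. Fix $k$ and $X = iX'\in[S]$, so $X'\in[S^i]$; the inductive hypothesis at level $k+1$ applied to $S^i$ yields $\Pf[k+1]\mu^{k+1}_{S^i}(X')\geq 1$. Using the shift identity $\Pf[k]\mu(iX') = f(k)\mu[\lambda] + \Pf[k+1]\mu_i(X')$ (with $\mu_i[\rho]=\mu[i\rho]$), the task reduces to proving $\Pf[k+1](\mu^k_S)_i(X')\geq 1 - f(k)\ww^k(S)$.

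Unpacking the construction, $(\mu^k_S)_i = \sum_s \alpha_s(W_s-W_{s-1})\,\nu_{[\tau_s]}$ and $\mu^{k+1}_{S^i} = \sum_s (W_s-W_{s-1})\,\nu_{[\tau_s]}$, where the sum is over stages $s$ at which some $\sigma_s=i\tau_s$ enters $S$, $\nu_{[\tau_s]}$ is the unit uniform measure on $[\tau_s]$, $W_s=\ww^{k+1}_s(S^i)$, $A_s = \ww^{k+1}_s(S^0)+\ww^{k+1}_s(S^1)$, and $\alpha_s = 1/(1+f(k)A_s)$. Since $A_s$ is nondecreasing, $\alpha_s\geq\alpha_T$ for all $s\leq T$ (the final stage), so termwise $(\mu^k_S)_i\geq\alpha_T\,\mu^{k+1}_{S^i}$ as measures, and hence $\Pf[k+1](\mu^k_S)_i(X')\geq\alpha_T\,\Pf[k+1]\mu^{k+1}_{S^i}(X')\geq\alpha_T=1/(1+f(k)A_T)$.

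It remains to verify $1/(1+f(k)A_T)\geq 1-f(k)\ww^k(S)$, equivalently $\ww^k(S)\geq A_T/(1+f(k)A_T)$. Setting $\varphi(a)=a/(1+f(k)a)$, this inequality compares $\ww^k(S) = \sum_s(A_s-A_{s-1})/(1+f(k)A_s)$ with the telescoping sum $\varphi(A_T)=\sum_s (\varphi(A_s)-\varphi(A_{s-1}))$; a termwise check, after clearing denominators, reduces to $A_s\geq A_{s-1}$, which holds. Combining gives $\Pf[k]\mu^k_S(iX')\geq f(k)\ww^k(S)+(1-f(k)\ww^k(S))=1$. The main obstacle is spotting the clean measure comparison $(\mu^k_S)_i\geq\alpha_T\,\mu^{k+1}_{S^i}$, which bypasses any explicit Abel summation, together with the telescoping inequality linking the two expressions for the level-$k$ dynamic weight; once these observations are made, the algebra assembles itself.
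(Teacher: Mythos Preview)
Your proof is correct and follows essentially the same approach as the paper. Both arguments reduce to the finite case via monotonicity, proceed by induction on string length, use the shift identity $\Pf[k]\mu(iX')=f(k)\mu[\lambda]+\Pf[k+1]\mu_i(X')$, and hinge on the same measure comparison $(\mu^k_S)_i\geq \alpha_T\,\mu^{k+1}_{S^i}$ with $\alpha_T=1/(1+f(k)A_T)$; the only cosmetic differences are that the paper inducts on the length of the last-entered string rather than the longest string in $S$, and that the paper establishes $\ww^k(S)\geq A_T/(1+f(k)A_T)$ by bounding each increment directly rather than via your telescoping computation with $\varphi$.
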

\begin{proof}
Consider $X\in [S]$ and take $\sigma\in S$ such that $\sigma\prec X$. Let $t$ be the stage at which $\sigma$ enters $S_t$. Note that $\mu^k_{S_t}\leq \mu^k_S$, hence if we prove that $\Pf[k]\mu^k_{S_t}(X)\geq 1$, then $\Pf[k]\mu^k_S(X)\geq 1$. So it is sufficient to prove the following claim: if $S = S_t$ is finite and $\sigma$ enters $S$ at stage $t$, then $(\forall k)(\forall X\in[\sigma])\; \Pf[k]\mu^k_S(X)\geq 1$. Our proof proceeds by induction on the length of $\sigma$.

If $\sigma = \lambda$, then $\mu^k_S$ is just the measure $\mu_{1/|f_k|}$ from Lemma~\ref{lem:cap-basic}\eqref{it:full}, so we are done. If $\sigma\neq\lambda$, then without loss of generality, assume that $\sigma$ starts with $0$. Let $\sigma = 0\tau$ and $X = 0Y$. By our inductive assumption, we have $\Pf[k+1]\mu^{k+1}_{S^0}(Y)\geq 1$.

Define $\nu_0$ such that $\nu_0[\rho] = \mu^k_S[0\rho]$. Consider a string $\rho$ that enters $S^0$ at stage $r\leq t$ (allowing the possibility that $\rho = \sigma$). Let $v = \ww^{k+1}_r(S^0)-\ww^{k+1}_{r-1}(S^0)$. Then at stage $r$ we add measure $v$ to $\mu^{k+1}_{S^0}$ uniformly on $[\rho]$. On the other hand, we add
\[
\frac{v}{1+f(k)\left(\ww^{k+1}_r(S^0)+\ww^{k+1}_r(S^1)\right)} \geq \frac{v}{1+f(k)\left(\ww^{k+1}(S^0)+\ww^{k+1}(S^1)\right)}
\]
to $\mu^k_S$ uniformly on $[0\rho]$. This same measure is added to $\nu_0$ uniformly on $[\rho]$. As this is the only way that measure is added to either $\mu^{k+1}_{S^0}$ or $\nu_0$, we have
\[
\nu_0 \geq \left(\frac{1}{1+f(k)\left(\ww^{k+1}(S^0)+\ww^{k+1}(S^1)\right)}\right)\mu^{k+1}_{S^0}.
\]
As we just argued with $\mu^{k+1}_{S^0}$, whenever we add measure $v$ to $\mu^{k+1}_{S^1}$, we also add at least $v/(1+f(k)(\ww^{k+1}(S^0)+\ww^{k+1}(S^1)))$ to $\mu^k_{S}$. So
\[
\mu^k_S[\lambda] \geq \frac{\mu^{k+1}_{S^0}[\lambda]+\mu^{k+1}_{S^1}[\lambda]}{1+f(k)\left(\ww^{k+1}(S^0)+\ww^{k+1}(S^1)\right)} = \frac{\ww^{k+1}(S^0)+\ww^{k+1}(S^1)}{1+f(k)\left(\ww^{k+1}(S^0)+\ww^{k+1}(S^1)\right)}.
\]
Putting it all together,
\begin{align*}
\Pf[k]\mu^k_S(X) &= f(k)\mu^k_S[\lambda] + \Pf[k+1]\nu_0(Y) \\
	&\geq f(k)\left(\frac{\ww^{k+1}(S^0)+\ww^{k+1}(S^1)}{1+f(k)\left(\ww^{k+1}(S^0)+\ww^{k+1}(S^1)\right)}\right) \\
	&\hspace{0.5in} + \left(\frac{1}{1+f(k)\left(\ww^{k+1}(S^0)+\ww^{k+1}(S^1)\right)}\right)\Pf[k+1]\mu^{k+1}_{S^0}(Y) \geq 1. \qedhere
\end{align*}
\end{proof}

The fact that we can build left-c.e.\ measures ``almost realizing'' the $f$-capacity of $\Sigma^0_1$-classes allows us to connect $\Cf$-randomness to the behavior of $\KM$.

\begin{lem}\label{lem:divergence}
Assume that $f\colon\omega\to[0,\infty)$ is amicable and $|f|$ is computable. If $X\in 2^\omega$ is not $\Cf$-random, then $\displaystyle\sum_{n\in\omega} f(n)2^{-\KM(X\uh n)} = \infty$.
\end{lem}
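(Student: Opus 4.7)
The plan is to exploit the construction from Section~\ref{sec:effectiveness} to manufacture a single finite left-c.e.\ measure that concentrates enough mass around $X$ at every scale, and then feed it through the universality of $\mathbf{M}$.

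First I would fix a $\Cf$-test $\{U_n\}_{n\in\omega}$ that $X$ fails. I may assume without loss of generality that $X \in U_n$ for every $n$ (thinning the test to a subsequence $V_k = U_{n_k}$ with $n_k \geq k$ still gives $\Cf(V_k) \leq 2^{-k}$, and by standard universal-test considerations this amounts to the same notion). Uniformly in $n$, I would write $U_n = [S_n]$ for a prefix-free c.e.\ set $S_n$ equipped with a good enumeration, and apply the construction from Section~\ref{sec:effectiveness} to obtain a uniformly left-c.e.\ family of measures $\mu^0_{S_n}$ with
\[
\mu^0_{S_n}[\lambda] \;=\; \ww^0(S_n) \;\leq\; (2\|f\|+2)\,\Cf[][S_n] \;\leq\; (2\|f\|+2)\,2^{-n},
\]
and with $\Pf \mu^0_{S_n}(Y)\geq 1$ for every $Y\in [S_n]$, by Lemma~\ref{lem:bound} and the potential lemma just above it.

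Next I would sum: let $\nu = \sum_{n\in\omega} \mu^0_{S_n}$. The uniform left-c.e.\ approximations to the $\mu^0_{S_n}$ combine to a uniform left-c.e.\ approximation to $\nu$, and
\[
\nu[\lambda] \;\leq\; (2\|f\|+2)\sum_{n\in\omega} 2^{-n} \;=\; 2(2\|f\|+2) \;<\; \infty.
\]
So $\nu$ is a finite left-c.e.\ measure, and after scaling by a suitable constant it becomes a left-c.e.\ semimeasure. By the universality of $\mathbf{M}$, there is a constant $c>0$ with $\nu[\sigma] \leq c\cdot 2^{-\KM(\sigma)}$ for all $\sigma \in 2^{<\omega}$.

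Finally, since $X \in [S_n]$ for every $n$, we have $\Pf \mu^0_{S_n}(X) \geq 1$. Summing over $n$ and interchanging the two (nonnegative) sums:
\[
\sum_{m\in\omega} f(m)\,\nu[X\uh m] \;=\; \sum_{n\in\omega} \sum_{m\in\omega} f(m)\,\mu^0_{S_n}[X\uh m] \;=\; \sum_{n\in\omega} \Pf\mu^0_{S_n}(X) \;\geq\; \sum_{n\in\omega} 1 \;=\; \infty.
\]
Substituting the bound $\nu[X\uh m] \leq c\cdot 2^{-\KM(X\uh m)}$ gives the desired divergence
\[
\sum_{m\in\omega} f(m)\,2^{-\KM(X\uh m)} \;\geq\; c^{-1} \sum_{m\in\omega} f(m)\,\nu[X\uh m] \;=\; \infty.
\]

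The only real difficulty was already handled in Section~\ref{sec:effectiveness}: producing left-c.e.\ measures whose total mass is bounded by a fixed constant multiple of the $f$-capacity (this is precisely where amicability is used, via $\|f\|<\infty$ in Lemma~\ref{lem:bound}). Without such a uniform constant, the series defining $\nu$ would not have a finite sum, and we could not invoke domination by $\mathbf{M}$. Everything after that step is a routine Borel--Cantelli / Fubini maneuver combined with the universality of $\mathbf{M}$.
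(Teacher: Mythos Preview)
Your proposal is correct and follows essentially the same route as the paper's proof: build the uniformly left-c.e.\ measures $\mu^0_{S_n}$ from the Section~\ref{sec:effectiveness} construction, sum them into a single finite left-c.e.\ measure, invoke the universality of $\mathbf{M}$, and then interchange the two nonnegative sums to get divergence of the potential. The only cosmetic difference is your explicit thinning step to ensure $X\in U_n$ for all $n$; the paper simply assumes the test covers $X$ in every component.
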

\begin{proof}
Let $\{U_m\}_{m\in\omega}$ be a $\Cf$-test covering $X$. We may assume that we have an effective sequence of good enumerations of c.e.\ sets generating the $\Sigma^0_1$-classes $U_m$. From these good enumerations, we get an effective sequence $\{\mu_m\}$ of left-c.e.\ measures such that $(\forall X\in U_m)\; \Pf\mu_m(X)\geq 1$ and $\mu_m[\lambda] \leq a\Cf[][U_m]\leq a2^{-m}$, where $a=2\|f\|+2$. Consider the left-c.e.\ measure $\mu = \sum_{m\in\omega} \mu_m$. This is a finite measure because $\mu[\lambda] = \sum_{m\in\omega} \mu_m[\lambda] \leq \sum_{m\in\omega} a2^{-m} = 2a$. Therefore, there is a constant $c$ such that $(\forall\sigma)\; \KM(\sigma)\leq -\log(\mu[\sigma]) + c$. Rearranging, $(\forall\sigma)\; 2^{-\KM(\sigma)}\geq 2^{-c}\mu[\sigma]$. So we have
\begin{align*}
\sum_{n\in\omega} f(n)2^{-\KM(X\uh n)} &\geq 2^{-c}\sum_{n\in\omega} f(n)\mu[X\uh n] \\
	&= 2^{-c}\sum_{n\in\omega}f(n)\sum_{m\in\omega}\mu_m[X\uh n] = 2^{-c}\sum_{m\in\omega}\sum_{n\in\omega} f(n)\mu_m[X\uh n] \\
	&= 2^{-c}\sum_{m\in\omega}\Pf\mu_m(X) \geq 2^{-c}\sum_{m\in\omega} 1 = \infty. \qedhere
\end{align*}
\end{proof}

%%%%%%%%
%%%%%%%%
\section{Characterizing \texorpdfstring{$f$}{f}-energy randomness}
%%%%%%%%
%%%%%%%%

We get our main result by combining Lemma~\ref{lem:divergence} with Corollary~\ref{cor:one-direction} and the second author's characterization of $f$-energy randomness as $\Cf$-randomness (see~\cite{Rute:aa} or Theorem~\ref{thm:rute} above).

\begin{thm}\label{thm:main}
Assume that $f\colon\omega\to[0,\infty)$ is amicable and $|f|=\sum_{n\in\omega} f(n)2^{-n}$ is computable. Then $X$ is $f$-energy random if and only if $\displaystyle\sum_{n\in\omega} f(n)2^{-\KM(X\uh n)} < \infty$.
\end{thm}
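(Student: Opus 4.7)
The plan is to prove the theorem by combining three results already established in the paper: Corollary~\ref{cor:one-direction} for the forward direction, and Theorem~\ref{thm:rute} together with Lemma~\ref{lem:divergence} for the converse. Before invoking these, I would first dispose of the degenerate cases using Remarks~\ref{rem:too-fast} and~\ref{rem:fin-support}: since $|f|$ is a computable nonnegative real, if $|f|=0$ (i.e., $f\equiv 0$) or $|f|=\infty$, the equivalence holds trivially. So we may assume $|f|\in(0,\infty)$ and, after absorbing finitely many values if necessary, that $f$ has infinite support, which makes the capacity machinery of Sections~\ref{sec:capacity} and~\ref{sec:effectiveness} fully applicable.

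For the forward direction, suppose $X$ is $f$-energy random. Then by definition there exists a measure $\mu$ with $\Ef(\mu)<\infty$ for which $X$ is $\mu$-random, and Corollary~\ref{cor:one-direction} immediately delivers $\sum_n f(n)2^{-\KM(X\uh n)} < \infty$.

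For the converse I would argue by contrapositive. Assume $X$ is not $f$-energy random. By Theorem~\ref{thm:rute} (Rute's equivalence between $f$-energy randomness and $\Cf$-randomness), $X$ is not $\Cf$-random, so there is a $\Cf$-test $\{U_m\}_{m\in\omega}$ covering $X$. Then Lemma~\ref{lem:divergence}, whose hypotheses (amicability of $f$ and computability of $|f|$) are precisely those assumed in the theorem, yields $\sum_n f(n)2^{-\KM(X\uh n)} = \infty$.

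The actual difficulty in this chain of implications has already been absorbed into Section~\ref{sec:effectiveness}: namely, to convert a $\Cf$-test into a left-c.e.\ object comparable to the universal semimeasure $\mathbf{M}$, one must produce measures that are left-c.e.\ and almost realize $\Cf([U_m])$. This is nontrivial, as the explicit computation with $\Cs$ in Section~\ref{sec:effectiveness} shows that the natural stagewise-optimal measures can overshoot on individual cylinders, so the approximations do not converge from below. The \emph{dynamic weight} construction together with the bound $\ww^k(S)\leq (2\|f\|+2)\Cf[k][S]$ from Lemma~\ref{lem:bound}, in which amicability is used precisely to keep the multiplicative constant finite, resolves this obstacle. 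With Lemma~\ref{lem:divergence} already in hand, the theorem itself is a one-line deduction.
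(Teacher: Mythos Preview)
Your proof is correct and follows essentially the same route as the paper: reduce to the nondegenerate case via Remarks~\ref{rem:too-fast} and~\ref{rem:fin-support}, use Corollary~\ref{cor:one-direction} for the forward direction, and combine Theorem~\ref{thm:rute} with Lemma~\ref{lem:divergence} for the contrapositive of the converse. One small quibble: since the hypothesis that $|f|$ is computable already forces $|f|<\infty$, the case $|f|=\infty$ need not be treated separately (the paper notes this explicitly), but this does not affect the argument.
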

\begin{proof}
Implicit in our assumption that $|f|$ is computable is the assumption that $|f|<\infty$. Of course, by Remark~\ref{rem:too-fast}, the conclusion of the theorem holds trivially if $|f|=\infty$. By Remark~\ref{rem:fin-support}, we may also assume that $f$ has infinite support. Therefore, we may use the work of Sections~\ref{sec:capacity} and~\ref{sec:effectiveness}.

By Corollary~\ref{cor:one-direction}, if $X$ is $f$-energy random, then $\sum_{n\in\omega} f(n)2^{-\KM(X\uh n)} < \infty$. Now assume that $X$ is not $f$-energy random. Then by Theorem~\ref{thm:rute}, it is not $\Cf$-random. So by Lemma~\ref{lem:divergence}, we have $\sum_{n\in\omega} f(n)2^{-\KM(X\uh n)} = \infty$.
\end{proof}

%%%%%%%%
%%%%%%%%
\section{Applications}\label{sec:applications}
%%%%%%%%
%%%%%%%%

%%%%%%%%
\subsection*{\texorpdfstring{\boldmath$s$}{s}-energy randomness}
%%%%%%%%

In the introduction and more thoroughly in Section~\ref{subsec:s-energy}, we discussed the notion of $s$-energy randomness as introduced by Diamondstone and Kjos-Hanssen \cite{DK:12}. We noted that $s$-energy randomness is equivalent to $f$-energy randomness for $f(n)=2^{sn}$. Now fix a computable real number $s\in [0,1)$ and let $f(n)=2^{sn}$. Of course, $|f|=1/(1-2^{s-1})$ is computable. Furthermore, $f$ in nonzero and nondecreasing, hence it is amicable. So by the work of the previous section, $X\in 2^\omega$ is $s$-energy random if and only if
\[
\sum_{n\in\omega} 2^{sn-\KM(X\uh n)} < \infty.
\]
This proves Theorem~\ref{thm:main-s-energy}. It is natural to ask if this result holds for noncomputable dimensions $s$; our proofs do not appear to generalize.  
%\hl{It seems clear that the proof generalizes for noncomputable $s$ to give that $X$ is $s$-energy random iff $\sum_{n\in\omega} 2^{sn-\KM^s(X\uh n)} < \infty$.  \textbf{Right????}  However, this is different than $\sum_{n\in\omega} 2^{sn-\KM(X\uh n)} < \infty$.  To see this, let $X$ be $1/2$-energy random and let $s < 1/2$ be Turing equivalent to $X$.  Then $\sum_{n\in\omega} 2^{sn-\KM(X\uh n)} < \infty$, but $\KM^s(X\uh n)$ is bounded above by a constant, so $\sum_{n\in\omega} 2^{sn-\KM^s(X\uh n)} = \infty$.}

%%%%%%%%
\subsection*{Extensions to several symbols}
%%%%%%%%

The definitions and results of this paper naturally extended to alphabets with more than two symbols as follows.  Let $A$ be a finite alphabet.

\begin{defn}   A sequence $X \in A^\omega$ is \emph{$f$-energy random} if $X$ is random for some probability measure $\mu$ on $A^\omega$ with finite $f$-energy,
\[
\int_{A^\omega} \sum_{n\in \omega} f(n) \mu[X\uh n]\,d\mu(X).
\]
\end{defn}

\noindent The definitions of $f$-potential and $f$-capacity remain basically the same, namely
\begin{gather*}
\Pf \mu (X) = \sum_{n\in \omega} f(n) \mu[X\uh n], \\
\text{and }\Cf(U) = \inf\;\{\mu(A^\omega)\colon \mu\in\+M\text{ and }(\forall X\in U)\; \Pf\mu(X)\geq 1\}.
\end{gather*}
Moreover,  $|f| = \sum_{n\in\omega} f(n) |A|^n$.  As before $f\colon\omega\to[0,\infty)$ is \emph{amicable} if 
\[
\displaystyle\sup_{k\in\omega} f(k)\Cf[k+1](A^\omega)<\infty.
\]
The universal left-c.e.\ semimeasure $\mathbf{M}$ is defined as usual on $A^{<\omega}$ and a priori complexity is still defined as $KM(\sigma) = -\log_2 \mathbf{M}(\sigma)$ for $\sigma \in A^{<\omega}$. The proof of Theorem~\ref{thm:main} naturally extends to several symbols to give the following result.

\begin{thm}\label{thm:main2}
Assume that $f\colon\omega\to[0,\infty)$ is amicable and $|f|=\sum_{n\in\omega} f(n)|A|^{-n}$ is computable. Then $X \in A^\omega$ is $f$-energy random if and only if
\[
\sum_{n\in\omega} f(n) 2^{-\KM(X\uh n)} < \infty.
\]
\end{thm}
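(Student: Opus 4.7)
The plan is to re-run the entire development of Sections~\ref{sec:capacity} and~\ref{sec:effectiveness} (and the proof of Theorem~\ref{thm:rute}) with $2^\omega$ replaced by $A^\omega$, observing that each argument is essentially alphabet-independent once the binary branching $\{0,1\}$ is replaced by $|A|$-ary branching. Concretely, for $U \subseteq A^\omega$ and $i \in A$ put $U_i = \{X : iX \in U\}$, and set $f_k(n) = f(n+k)$ as before. The uniform measure $\mu_{1/|f_k|}$ on $A^\omega$ with $\mu_{1/|f_k|}[\sigma] = |A|^{-|\sigma|}/|f_k|$ satisfies $\Pf[k]\mu_{1/|f_k|}(X) = 1$ for every $X$, so Lemma~\ref{lem:cap-basic} goes through with $\Cf[k](A^\omega) = 1/|f_k|$. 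The recursion from Lemma~\ref{lem:cap-relation} becomes
\[
\Cf[k](U) = \frac{\sum_{i \in A} \Cf[k+1](U_i)}{1 + f(k)\sum_{i \in A} \Cf[k+1](U_i)},
\]
with the same proof: the upper bound is obtained by gluing measures $\mu_i$ on each $[i]$ and renormalizing by the same denominator, while the lower bound follows from the same direct estimate on $\Pf[k]\mu(iX)$. Lemmas~\ref{lem:cap-clopen} and~\ref{lem:cap-open}, and hence Theorem~\ref{thm:rute}, carry over verbatim once sums over $\{0,1\}$ are replaced by sums over $A$.

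For Section~\ref{sec:effectiveness} I would rebuild the dynamic weight $\ww^k_t(S)$ using the $|A|$-ary recursion, define the measures $\mu^k_{S,t}$ stagewise exactly as before, and prove the analogue of Lemma~\ref{lem:bound}: $\ww^k(S) \leq a\,\Cf[k][S]$ for an appropriate constant $a$. The only place the alphabet size intervenes is the bound $z := \sum_{i \in A} \Cf[k+1][S^i] \leq |A|\,\Cf[k+1](A^\omega)$, which gives $f(k)z \leq |A|\,\|f\|$. Setting $a = |A|\,\|f\| + 2$ keeps the hypothesis $f(k)z \in [0, a-2]$ of Lemma~\ref{lem:calc} satisfied, and the rest of the integral estimate $\ww^k(S) \leq \frac{1}{f(k)}\ln(1 + f(k) v_m)$ is unchanged. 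Amicability of $f$ is precisely what guarantees $\|f\| < \infty$, so $a$ is finite. The construction of $\mu^k_S$ as a left-c.e.\ measure and the verification that $(\forall X \in [S])\; \Pf[k]\mu^k_S(X) \geq 1$ both proceed identically, since the inductive step only manipulates the ``$0$-side'' and ``$1$-side'' independently and those now become ``$i$-side'' for $i \in A$.

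The step I expect to require the most care is the generalization of Lemma~\ref{lem:divergence}. Given a $\Cf$-test $\{U_m\}_{m \in \omega}$ covering $X$, I build left-c.e.\ measures $\mu_m$ with $\mu_m[\lambda] \leq a\,2^{-m}$ and $\Pf\mu_m \geq 1$ on $U_m$, form the left-c.e.\ finite measure $\mu = \sum_m \mu_m$, invoke the $A^\omega$ version of the universal left-c.e.\ semimeasure to obtain $2^{-\KM(\sigma)} \geq 2^{-c}\mu[\sigma]$ for all $\sigma \in A^{<\omega}$, and conclude
\[
\sum_{n} f(n) 2^{-\KM(X\uh n)} \geq 2^{-c}\sum_m \Pf\mu_m(X) = \infty.
\]
Once this is in hand, the proof of Theorem~\ref{thm:main2} is the literal analogue of Theorem~\ref{thm:main}: the forward direction is the $|A|$-ary version of Corollary~\ref{cor:one-direction} (whose proof uses nothing about the alphabet), and the reverse direction combines the $A^\omega$ analogue of Theorem~\ref{thm:rute} with the $A^\omega$ analogue of Lemma~\ref{lem:divergence}. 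The two trivial regimes $|f| = \infty$ and $f$ of finite support are handled exactly as in Remarks~\ref{rem:too-fast} and~\ref{rem:fin-support}, the former now using $\mu[\lambda]^2 \sum_n f(n)|A|^{-n}$ via Jensen's inequality with $|A|^n$ strings of length $n$.
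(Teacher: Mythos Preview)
Your proposal is correct and is precisely the approach the paper takes: the paper does not give a separate proof of Theorem~\ref{thm:main2} but simply asserts that ``the proof of Theorem~\ref{thm:main} naturally extends to several symbols,'' and you have spelled out that extension in detail, including correctly identifying the one nontrivial adjustment (the constant $a = |A|\,\|f\| + 2$ in the analogue of Lemma~\ref{lem:bound}, needed because $z \leq |A|\,\Cf[k+1](A^\omega)$). If anything, your write-up is more explicit than the paper's own treatment.
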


%%%%%%%%
\subsection*{s-randomness for several symbols}
%%%%%%%%

Let $A$ be a finite alphabet. Let $d = \log_2 |A|$ be the ``dimension'' of $A^\omega$.  For $X,Y\in A^\omega$, continue to use the metric $\rho(X,Y) = \inf\{2^{-n} : X\uh n = Y \uh n\}$.

\begin{defn}   Fix computable $s>0$. A sequence $X \in A^\omega$ is \emph{$s$-energy random} if $X$ is random for some measure $\mu$ on $A^\omega$ with finite Riesz $s$-energy,
\[ 
\RE(\mu) = \int_{A^\omega} \int_{A^\omega} \rho(X,Y)^{-s} \;d\mu(Y)d\mu(X).
\]
Fix a positive integer $k$. A sequence $X \in A^\omega$ is \emph{$\log^k$-energy random} if $X$ is random for some measure $\mu$ on $A^\omega$ with finite Riesz $\log^k$-energy,
\[ 
\RElog(\mu) = \int_{A^\omega} \int_{A^\omega} (-\log_2 \rho(X,Y))^{k} \;d\mu(Y)d\mu(X).
\]
A sequence $X \in A^\omega$ is \emph{$0$-energy random} if and only if it is $\log^1$-energy random.
\end{defn}

By the same proof as Corollary~\ref{cor:s-energy-f-energy}, $X \in A^\omega$ is $s$-energy random if and only if it is $f$-energy random for $f(n)=2^{sn}$.  Further, generalizing Corollary~\ref{cor:log-energy-equiv}, $X \in A^\omega$ is $\log^k$-energy random if and only if it is $f$-energy random for $f(n)=n^{k-1}$.  There are no $s$-energy randoms for $s\geq d$.

By Theorem~\ref{thm:main2}, $X\in A^\omega$ is $s$-energy random (for computable $s$) if and only if
\[
\sum_{n\in\omega} 2^{sn-\KM(X\uh n)} < \infty,
\]
and $X\in A^\omega$ is $\log^k$-energy random if and only if
\[
\sum_{n\in\omega} n^{k-1} 2^{-\KM(X\uh n)} < \infty.
\]

\begin{rem}
An important reason to consider larger alphabets is the study of energy randomness on $\R^d$.  While the focus of this paper is Cantor space, much of potential theory and its applications occur on $\R^d$.  A real $x\in\R$ can be represented via its binary expansion $X\in2^\omega$.  Similarly, a vector $(x_1,x_2)\in\R^2$ can be represented by a sequence $X\in4^\omega$ (interleave the binary expansions of $x_1$ and $x_2$ and take the resulting sequences of pairs $\{00,01,10,11\}$).  Using $2^d$ symbols is the correct way to represent $\R^d$ with strings of symbols since this representation preserves the potential-theoretic properties of energy and capacity (up to a constant multiple, see \cite[Thm~3.1]{Pemantle:1995aa}).  More details about energy randomness on $\R^d$ and its applications to multidimensional Martin-L\"of Brownian motion will be given in Rute \cite{Rute:aa}.
\end{rem}

%
% References
%

\bibliographystyle{plain}
\bibliography{references}

\end{document}